\documentclass[twoside,a4paper,11pt,limits]{amsart}

\usepackage{amsmath}
\usepackage{amsthm}
\usepackage{amssymb}
\usepackage{amsfonts}
\usepackage{amsxtra}
\usepackage{comment}

\usepackage{enumitem}
\setenumerate{label={\rm (\alph{*})}}

\usepackage[frak,operator]{paper_diening}
\usepackage{graphicx}

\newtheorem{theorem}{Theorem}[section]
\newtheorem{lemma}[theorem]{Lemma}
\newtheorem{corollary}[theorem]{Corollary}
\newtheorem{proposition}[theorem]{Proposition}
\newtheorem{remark}[theorem]{Remark}

\numberwithin{equation}{section}

\newcommand{\meantmp}[2]{#1\langle{#2}#1\rangle}
\newcommand{\mean}[1]{\meantmp{}{#1}}

\newcommand{\Rn}{{\setR^n}}
\newcommand{\RN}{{\setR^N}}
\newcommand{\RNn}{\setR^{N\times n}}

\newcommand{\px}{{p(\cdot)}}
\newcommand{\pdx}{{p'(\cdot)}}

\newcommand{\pmi}{{p_Q^-}}
\newcommand{\ppl}{{p_Q^+}}

\newcommand{\gh}{\big(\abs{G}^\px+h\big)}
\newcommand{\dint}{\dashint}

\newcommand{\PP}{\mathcal{P}}
\newcommand{\PPln}{\mathcal{P}^{{\log}}}
\newcommand{\PPvln}{\mathcal{P}^{{\log}}_{{\rm van}}}

\newcommand{\Ms}{M^*_\Omega}
\newcommand{\Mss}{M^*_{m_0,\Omega}}

\newcommand{\pre}{{\rm pre}}
\newcommand{\cent}{{\rm center}}

\newcommand{\Uel}{{\ensuremath{U^{\kappa,\epsilon}_\lambda}}}

\begin{document}

\title{Global gradient estimates for the $p(\cdot)$-Laplacian}

\author{L.~Diening and S.~Schwarzacher}

\address{Theresienstr. 39, D-80333 Munich, Germany}

\email{lars@diening.de}
\email{schwarz@math.lmu.de}

\begin{abstract}
  We consider \Calderon{}-Zygmund type estimates for the
  non-homogeneous $p(\cdot)$-Laplacian system
  \begin{align*}
    -\divergence(\abs{D u}^{p(\cdot)-2} Du) &=
    -\divergence(\abs{G}^{p(\cdot)-2} G),
  \end{align*}
  where $p$ is a variable exponent. We show that
  $\abs{G}^{p(\cdot)} \in L^q(\Rn)$ implies $\abs{D u}^{p(\cdot)}
  \in L^q(\Rn)$ for any $q \geq 1$.  We also prove local estimates
  independent of the size of the domain and introduce new techniques to variable analysis. The paper is an extension of
  the local estimates of Acerbi-Mingione~\cite{AceM05}.
\end{abstract}
\keywords{%
  nonlinear Calderon-Zygmund theory, 
  variable exponents,
  generalized Lebesgue and Sobolev spaces,
  electrorheological fluids
\\
MSC: 35J60 35B65 35B45 35Q35
}

\maketitle

\section{Introduction}
\label{sec:intro}
\noindent
In recent years there has been an extensive interest in the field of
variable exponent spaces $L^\px$. Different from the classical
Lebesgue spaces $L^p$, the exponent is not a constant but a function
$p=p(x)$. 

The increasing interest was motivated by the model for
electrorheological fluids~\cite{RajR96,Ruz00}. Those are smart
materials whose viscosity depends on the applied electric field.
 This is modeled via a dependence of the viscosity on a variable exponent. 
Electrorheological fluids can for example be used in the construction of
clutches and shock absorbers.

Further applications of the variable exponent spaces can be found in
the area of image reconstruction. Here, the change of the exponent is
used to model different smoothing properties according to the edge
detector. This can be seen as a hybrid model of standard diffusion and
the TV-model introduced by~\cite{CheLevRao06}.

A model problem for image reconstruction as well as a starting point for the study of
 electrorheological fluids
 is the $\px$-Laplacian system
We consider local, weak solutions $u\in W^{1,\px}(\Omega)$ of the
non-homogeneous $p(\cdot)$-Laplacian system
\begin{align}
  \label{eq:difeq}
  -\divergence (\abs{Du}^{\px-2} Du)=-\divergence (\abs{G}^{\px-2}G).
\end{align}
where $\Omega \subset \Rn$ is an open set and $u \,:\, \Omega \to
\RN$. Note that the specific form of the right hand side is no
restriction, but allows an easier formulation of our results. 

Our main result is that $L^q$ integrability of $\abs{G}^\px$ implies
$L^q$ integrability of $\abs{Du}^\px$. We present local and global
versions of this result, see Subsection~\ref{ssub:main}. For the
exponent we assume the vanishing $\log$-H{\"o}lder continuity introduced
in~\cite{AceM01} (see \eqref{ineq:log_cont} for the definition). Our result is an extension of the results of Acerbi
and Mingione in~\cite{AceM05}, where the authors prove the local
version of the higher integrability.  The main difference
to~\cite{AceM05} is that our estimates have a controllable dependence
of the size of the ball, where higher integrability is considered.
This allows to extend the result to the whole spaces as well as to countable families of balls. The proof of these estimates
require a finer analysis of the underlying $\px$-structure. This refinement simplifies the proof significantly.

%
%

Higher integrability of the non-linear $p$-Laplace (which corresponds
to a constant exponent~$p$) was introduced in~\cite{Iwa83}. The
principle is known under the name \textit{Non-linear \Calderon-Zygmund Theory}.
In the limiting case $q=\infty$, the space $L^\infty$ has to be
replaced by~$\setBMO$ as in the linear \Calderon-Zygmund theory.
Corresponding $\setBMO$ results for the $p$-Laplace system has been
shown in~\cite{DiBMan93,DieKapSch11}. In~\cite{DieKap12,DieKapSch13} a nonlinear \Calderon-Zygmund theory was developed for the (constant) p-Stokes equation, which have some implications for the p-Navier-Stokes system. Furthermore higher integrability for small exponents and the $\px$-Stokes system was shown in~\cite{Zhi97} and \cite[Chapter 7]{DiePhd}. One future aim would be to combine these results with the variable exponent technique presented in this work to gain a nonlinear \Calderon-Zygmund theory for electrorheological fluids, that includes large exponents and BMO estimates. 
%
%

This paper is formulated only in terms of the $\px$-Laplacian in order
to simplify the notations. However, it is possible to work in a more
general setting and to consider the equation
\begin{align*}
  -\divergence(A(\cdot,Du)) = -\divergence(A(\cdot,G)).
\end{align*}
Our estimates in Section~\ref{sec:gehring} are only based on
the following two estimates:
\begin{align*}
  \abs{A(x,z)} &\leq c_1\, \abs{z}^{p(x)-1} +h_1(x),
  \\
  \abs{A(x,z)\cdot z} &\geq c_2\, \abs{z}^{p(x)}-h_2(x)
\end{align*}
for all $x \in \Omega$ and all $z \in \RNn$ and $h_1,h_2\in L^1(\Omega)\cap L^\infty(\Omega)$ . No more additional assumptions on $A$ are needed!

For Section~\ref{sec:higher-integrability} our estimates additionally need
\begin{align}
  \label{eq:diffAlog}
  \abs{A(x,z)-A(y,z)}&\leq c_3\,\abs{p(x)-p(y)}\,\bigabs{\log\abs{z}}
  \big(\abs{z}^{p(x)-1}+\abs{z}^{p(y)-1}\big),
  \\
  \label{eq:diffAV}
  \abs{z}^{p(x)} &\leq c_4 \abs{\xi}^{p(x)} + c_4 \big(
  A(x,z)-A(x,\xi)\big) \cdot(z-\xi) \big)
\end{align}
for all $x \in \Omega$ and $z,\xi \in \Rn$. However, we use some estimates (for our homogeneous comparison solution) which requires more assumptions on $A$, but not on $\px$. (See Theorem \ref{thm:hp2} and Theorem \ref{thm:hp1}). The necessary assumption for these theorems can be found in the given references.
Certainly all estimates mentioned above are valid in case of the $\px$-Laplacian;
i.e. for $A(x,z) := \abs{z}^{\px-2} z$.  Note that the same technique
allows to treat the cases $A(x,z) = (\gamma + \abs{z})^{\px-2} z$ or
$A(x,z) = (\gamma^2 + \abs{z}^2)^{\frac{\px-2}{2}} z$ for some $\gamma
\geq 0$.

The structure of the paper is as follows. In Section~\ref{sec:notation}
we introduce the necessary notation. In particular, the Lebesgue
spaces with variable exponents and the (vanishing) $\log$-H{\"o}lder
continuity is introduced. In Section~\ref{sec:gehring} we show that
the solutions to~\eqref{eq:difeq} satisfy a Gehring type
estimate. This corresponds to the higher integrability $\abs{Du}^\px \in
L^q$ with~$q$ only slightly bigger than one. The proof goes by
standard arguments via a Caccioppoli estimate and a reverse H{\"o}lder's
inequality.
 In
Section~\ref{sec:higher-integrability} we prove the main results on
higher integrability for large exponents. The arguments uses 
redistributional estimates (good-$\lambda$ estimates), which are based
on comparison estimates.

%

\section{Notation and Structure}
\label{sec:notation}
\noindent
By $c$ we denote a generic constant, whose value may change between
appearances even within a single line.  By $f\sim g$ we mean that
there exists $c$ such that $\frac1c f \le g \le c\,f$.

For a measurable set $E \subset \Rn$ let $\abs{E}$ be the Lebesgue
measure of~$E$ and $\chi_E$ its characteristic function.  For an open
set $\Omega \subset \Rn$ let $L^0(\Omega)$ denote the set of
measurable functions $f\,:\, \Omega \to \setR$ and let $L^1_{\loc}$
denote the set of locally integrable functions (integrable on compact
subsets). For $0 < \abs{E} < \infty$ and $f \in L^1(E)$ we define the
mean value of $f$ over $E$ by
\begin{align*}
  \mean{f}_E:=\dint_{E}f \,dx:=\frac{1}{\abs{E}}\int_E f \,dx.
\end{align*}
By $L^{s,\infty}(\Rn) := \set{f \in L^0(\Rn)\,:\, \norm{f}_{s,\infty}<
  \infty}$ with $s \in [1,\infty)$ and 
\begin{align*}
  \norm{f}_{s, \infty} := \sup_{\lambda>0} \norm{\lambda \,
    \chi_{\set{\abs{f}>\lambda}}}_s
  = \sup_{\lambda>0} \lambda
  \abs{\set{\abs{f}>\lambda}}^{ \frac 1s}
\end{align*}
we denote the Marcinkiewicz spaces.

Let us introduce the spaces of variable exponents~$L^\px$.  We use the
notation of the recent book~\cite{DieHHR11}.  We define $\PP(\Omega)$
to consist of all $p \in L^0(\Omega)$ with $p\,:\, \Omega \to
[1,\infty]$ (called variable exponents).  For $p \in\PP(\Omega)$ we
define $p^-_\Omega := \essinf_\Omega p$ and $p^+_\Omega :=
\esssup_\Omega p$.  For non-localized results we omit the
index~$\Omega$ of $p^+_\Omega$ and $p^-_\Omega$.  Note that the higher
integrability results in this article are restricted to the case $1 <
p^- \leq p^+ < \infty$.

For $p \in \PP(\Omega)$ with $p^+ < \infty$ the generalized Lebesgue
space $L^\px(\Omega)$ is defined as
\begin{align*}
  L^\px(\Omega):=\bigset{f \in L^0(\Omega) \,:\,
    \norm{f}_{L^\px(\Omega)} < \infty},
\end{align*}
where
\begin{align*}
  \norm{f}_{\px,\Omega}:=\norm{f}_{L^\px(\Omega)}:=
  \inf\Biggset{\lambda>0\,:\,\int_{\Rn}\biggabs{
    \frac{f(x)}{\lambda}}^{p(x)} \,dx \leq 1}.
\end{align*}
The generalized Sobolev space $W^{1,\px}(\Omega)$ consists of those
$L^1_{\loc}(\Omega)$-functions whose norm
\begin{align*}
  \norm{f}_{W^{1,\px}(\Omega)}= \norm{f}_{L^\px(\Omega)}
  +\norm{Df}_{L^\px(\Omega)};
\end{align*}
is finite, where $Df$ is the distributional derivative of $f$.

If $p$ is constant, then $L^\px$ and $W^{1,\px}$ coincide with the
classical Lebesgue and Sobolev spaces. The spaces $L^\px$ where
introduced by~\cite{Orl31}. Many properties of $L^\px$ and $W^{1,\px}$
can be found in~\cite{KovR91,FanZ01} and the book~\cite{DieHHR11}.

We say that a function $\alpha\colon \Omega \to \setR$ is {\em
  $\log$-H{\"o}lder continuous} on $\Omega$ if there exists a constant $c
\geq 0$ and $\alpha_\infty
\in \setR$ such that
\begin{align*}
  \abs{\alpha(x)-\alpha(y)} &\leq \frac{c}{\log
    (e+1/\abs{x-y})} &&\text{and}& \abs{\alpha(x) - \alpha_\infty}
  &\leq \frac{c}{\log(e + \abs{x})}
\end{align*}
for all $x,y\in \Omega$. The first condition describes the so called
local $\log$-H{\"o}lder continuity and the second the decay
condition.  The smallest such constant~$c$ is the $\log$-H{\"o}lder
constant of~$\alpha$. The decay condition is always satisfied if
$\Omega$ is bounded. We define $\PPln(\Omega)$ to consist of those
exponents $p\in \PP(\Omega)$ for which $\frac{1}{p} \,:\, \Omega \to
[0,1]$ is $\log$-H{\"o}lder continuous on $\Omega$. If $p \in \PP(\Omega)$
is bounded, then $p \in \PPln(\Omega)$ is equivalent to the
$\log$-H{\"o}lder continuity of $p$. However, working with $\frac 1p$
gives better control of the constants especially in the context of
averages and maximal functions. Therefore, we define $c_{\log}(p)$ as
the $\log$-H{\"o}lder constant of~$1/p$. Expressed in~$p$ we have for all
$x,y \in \Omega$
\begin{align}
  \label{eq:plogH}
  \abs{p(x) - p(y)} \leq \frac{(p^+)^2 c_{\log}(p)}{\log
    (e+1/\abs{x-y})} &&\text{and }& \abs{p(x) - p_\infty} &\leq
  \frac{(p^+)^2 c_{\log}(p)}{\log(e + \abs{x})}.
\end{align}

In this work cubes are always parallel to the axes and are usually
called~$Q$. We write $\ell(Q)$ for the side length of~$Q$ and
$\cent(Q)$ for the center of~$Q$.  By $\gamma Q$ with $\gamma>0$ we
mean the cube scaled by the factor~$\gamma$ with the same center
as~$Q$.

If $p \in \PPln(\Omega)$ with $p^- > 1$, then the Hardy-Littlewood
maximal operator~$M$
\begin{align*}
  (Mf)(x) := \sup_{x \ni Q} \dashint_Q \abs{f(y)}\,dy,
\end{align*}
is bounded on $L^\px(\Rn)$, where the supremum is taken over all cubes
(with sides parallel to the axes) containing~$x$. The operator norm
of~$M$ depends only on~$c_{\log}(p)$ and $p^-$.  This result goes back
to~\cite{Die04,CruFN03}. The most advanced form of this result can be
found in~\cite[Theorem 4.3.8]{DieHHR11}.

The boundedness of $M$ has many interesting consequences like Sobolev
embeddings and the boundedness of singular integrals, see
\cite{CruFMP06,DieHHR11}.

In the case of higher-integrability of the $\px$-Laplacian system a
slightly stronger condition is needed. It's local version has been introduced by
Acerbi and Mingione in~\cite{AceM01}. It's natural decay counterpart has been introduced in \cite{Schw10}. We say that a function
$\alpha\colon \Omega \to \setR$ is {\em vanishing $\log$-H{\"o}lder
  continuous} on $\Omega$ if there exists $\alpha_\infty$ such that
for every $\epsilon>0$ there exists $r,R>0$ such that
\begin{alignat}{2}
   \label{ineq:log_cont}
  \abs{\alpha(x)-\alpha(y)} &\leq \frac{\epsilon}{\log
    (e+1/\abs{x-y})}
\end{alignat}
for all $x,y$ with $\abs{x-y} \leq r$ and all $x,y$ with $\abs{x},
\abs{y} \geq R$ and
\begin{align*}
  \abs{\alpha(z) - \alpha_\infty} &\leq \frac{\epsilon}{\log(e +
    \abs{z})}
\end{align*}
for all $z$ with $\abs{z} \geq R$.
We say that $p \in \PPvln(\Omega)$ if $\frac{1}{p}$ is vanishing
$\log$-H{\"o}lder continuous. For bounded exponents this is equivalent to
the vanishing $\log$-H{\"o}lder continuity of~$p$ itself. 

The necessity of the extra ``vanishing'' condition is in analogy to
the situation of the higher integrability results for the
$p$-Laplacian with coefficients, see \cite{KinZho99}: here it is
necessary that the coefficients are in VMO (vanishing mean
oscillation) rather than just in $\setBMO$ (bounded mean oscillation).
As in~\cite{KinZho99} the vanishing $\log$-H{\"o}lder condition can be
replaced by smallness of the $\log$-H{\"o}lder constant~$c_{\log}$, see Remark \ref{rem:pVMO}. 

\section{A Gehring type estimate}
\label{sec:gehring}
\noindent
In this section we show that local solutions of the $\px$-Laplacian
system satisfy a Caccioppoli estimate. The next step is a reverse
H{\"o}lder estimate. We present the tools from the Lebesgue and Sobolev
spaces of variable exponents, which are necessary for this step. In
the end we apply the classical Gehring Lemma to the quantity
$\abs{Du}^\px$ to derive higher integrability for small
exponents.

Let us begin with the Caccioppoli estimate.
\begin{lemma}[Caccioppoli estimate]
  \label{lem:cacc}
  Let $Q\subset\Rn$ be a cube (or ball) with length~$R$ and $p\in
  \PP(2Q)$ with $p^+<\infty$. Then the local weak solution $u\in
  W^{1,\px}(2Q)$ of \eqref{eq:difeq} satisfies
  \begin{align*}
    \int_{{Q}}\abs{Du}^{\px} \,dx \leq c \left(
    \int_{2{Q}}\biggabs{\frac{u-\mean{u}_{2Q}}{R}}^{\px} \,dx +
    \int_{2{Q}}\abs{G}^{\px} \,dx\right).
  \end{align*}
  The constant depends only on $p^+$.
\end{lemma}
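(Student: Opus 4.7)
The plan is to carry out the standard Caccioppoli argument, adapted to variable exponents by choosing the power of the cut-off large enough that Young's inequality redistributes powers of $\eta$ cleanly.

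First I fix a cut-off $\eta\in C_c^\infty(2Q)$ with $\eta\equiv 1$ on $Q$, $0\le\eta\le 1$, and $\abs{D\eta}\le c/R$, and I pick an integer exponent $s\ge p^+$. Since $u$ is a local weak solution of \eqref{eq:difeq}, I would test with $\phi:=\eta^s(u-\mean{u}_{2Q})$, which is admissible because $\eta^s$ vanishes near $\partial(2Q)$ and $p^+<\infty$ guarantees $\phi\in W^{1,\px}_0(2Q)$. Expanding $D\phi=\eta^s\,Du+s\eta^{s-1}D\eta\otimes(u-\mean{u}_{2Q})$ and splitting the equation into the contributions from $Du$ and $G$ produces the identity
\begin{align*}
\int \eta^s\abs{Du}^{\px}\,dx
&= -s\int \eta^{s-1}\,\abs{Du}^{\px-2}Du\cdot D\eta\,(u-\mean{u}_{2Q})\,dx \\
&\quad + \int \abs{G}^{\px-2}G\cdot\bigl(\eta^s Du+s\eta^{s-1}D\eta\,(u-\mean{u}_{2Q})\bigr)\,dx.
\end{align*}

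Next I absorb each cross term using Young's inequality with the variable exponent pair $(p(x),p'(x))$. The key algebraic observation, which is why I chose $s\ge p^+$, is the split $\eta^{s-1}=\eta^{s(p(x)-1)/p(x)}\cdot\eta^{s/p(x)-1}$ with the second exponent nonnegative. This lets me bound the first term by
\begin{align*}
\eta^{s-1}\abs{Du}^{\px-1}\abs{D\eta}\abs{u-\mean{u}_{2Q}}
\le\epsilon\,\eta^s\abs{Du}^{\px}+c(\epsilon)\,\eta^{s-\px}\abs{D\eta}^{\px}\abs{u-\mean{u}_{2Q}}^{\px},
\end{align*}
and the factor $\eta^{s-\px}\le 1$ while $\abs{D\eta}^{\px}\le c\,R^{-\px}$ delivers exactly $\abs{(u-\mean{u}_{2Q})/R}^{\px}$. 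The two $G$-terms are handled by applying Young's inequality to the pair $(\abs{G}^{\px-1},\,\eta^s\abs{Du})$ and to $(\abs{G}^{\px-1},\,\abs{D\eta}\abs{u-\mean{u}_{2Q}})$, which controls them by $\epsilon\,\eta^s\abs{Du}^{\px}+c(\epsilon)(\abs{G}^{\px}+\abs{D\eta}^{\px}\abs{u-\mean{u}_{2Q}}^{\px})$.

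Finally I choose $\epsilon>0$ sufficiently small so that all $\epsilon\eta^s\abs{Du}^{\px}$ contributions can be absorbed into the left-hand side; what remains after restricting the integration on the left to $Q$ (where $\eta\equiv 1$) is precisely the asserted bound, with constant depending on $p^+$ through the choice $s\ge p^+$ and the Young constants. The only point requiring any care is the variable-exponent Young inequality step: the powers of $\eta$ must be distributed so that after Young one ends up with $\eta^s\abs{Du}^{\px}$ (absorbable) rather than $\eta^{s-p(x)}\abs{Du}^{\px}$ (not absorbable), which is exactly what the choice $s\ge p^+$ buys. Everything else is routine, and no $\log$-Hölder assumption on $p$ is needed here.
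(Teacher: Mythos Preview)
Your argument is correct and essentially identical to the paper's: both test with $\eta^s(u-\mean{u}_{2Q})$ for an integer $s\ge p^+$, apply Young's inequality with the variable exponent pair $(p(x),p'(x))$, and exploit the choice of $s$ to ensure the resulting $\eta$-powers on the $\abs{Du}^{\px}$ terms are at least $s$ so they can be absorbed. The only cosmetic difference is that the paper groups all of $\eta^{s-1}$ with $\abs{Du}^{\px-1}$ and then uses $(s-1)\pdx\ge s$ afterward, whereas you split $\eta^{s-1}$ beforehand; the content is the same.
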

\begin{proof}
  The Caccioppoli estimate is proved straight forward by using the
  test function $\eta^k(u-\mean{u}_{2Q})$; here $\chi_Q\leq\eta\leq
  \chi_{2Q}$ is the usual cut off function with $\abs{D\eta}\leq\frac{c}{R}$
  and $k>p^+$ a fixed integer. 
  Therefore we get by \eqref{eq:difeq}
  \begin{align}
    \label{eq:1}
    \begin{aligned}
      \int\eta^k\abs{Du}^{\px} \,dx &\leq k\int\abs{G}^{\px-1} \abs{D \eta}
      \abs{u-\mean{u}_{2Q}}\, \eta^{k-1} \,dx
      \\
      &\quad +\,
      \int\eta^k\abs{G}^{\px-1} \abs{Du} \,dx
      \\
      &\quad + k \int \abs{Du}^{\px-1} \eta^{k-1}
      \abs{D\eta}\, \abs{u-\mean{u}_{2Q}} \,dx
      \\
      &=:(I)+(II)+(III).
    \end{aligned}
  \end{align}
  Now, $\abs{D\eta} \leq \frac cR$ and Young's inequality imply
  \begin{align*}
    (I) &\leq c\int_{2Q}\abs{G}^{\px} \,dx +
    c\int_{2Q}\biggabs{\frac{u-\mean{u}_{2Q}}{R}}^{\px} \,dx,
    \\
    (II) &\leq c_{\delta}\int_{2Q}\abs{G}^{p(\cdot)} \,dx + \delta
    \int \eta^{k\px}\abs{Du}^{p(\cdot)} \,dx,
    \\
    (III) &\leq \delta\int\eta^{(k-1)\pdx}\abs{Du}^{p(\cdot)} \,dx +
    c_\delta\int_{2Q}\biggabs{\frac{u-\left\langle u\right\rangle
        _{2Q}}{R}}^{\px} \,dx.
  \end{align*}
  Since $(k-1)\pdx \geq k$, we can absorb the terms with the
  factor~$\delta$ on the left hand side of~\eqref{eq:1} to get the claim.
\end{proof}
To deduce the reverse H{\"o}lder estimate from our Caccioppoli estimate,
we need a Sobolev-\Poincare{} inequality for variable exponents. The
proof is based on a Jensen type inequality, known as the \textit{key estimate for variable exponents}.
It first appeared in a simpler form in~\cite{Die04} and was later
improved in~\cite[Theorem~4.2.4]{DieHHR11}. We will use a further
improvement \cite[Theorem~1]{DieSch13key}, a further
improvement which allows to apply the key estimate to a larger
class of functions. The idea of this refinement goes back to
Schwarzacher~\cite{Schw10}.
\begin{lemma}[$\px$-Jensen's inequality]
  \label{lem:pjensen}
  Let $Q \subset \Rn$ be a cube (or ball), $p \in \PPln(Q)$ with
  $p^+<\infty$, $m>n$, $\beta\geq 0$ and $K_1 \geq 1$. Then
  \begin{align*}
    \bigg( \dint_{Q} \abs{f}dy\bigg)^{p(x)}\leq c\dint_{{Q}}
    \abs{f}^{p(y)}dy+c\,
    (e+\abs{x})^{-m}+c\,\dint_{Q}(e+\abs{y})^{-m}\,dy
  \end{align*}
  for all $x\in Q$ and all $f\in L^{p(\cdot)}(Q)$ satisfying
  \begin{align*}
    \dashint_Q \abs{f}\,dy \leq K_1\,\max \set{1, \abs{Q}^{-\beta}},
  \end{align*}
  where $c$ depends only on $c_{\log}(p), m, n, \beta, K_1,p^+$.
\end{lemma}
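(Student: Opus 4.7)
The plan is to reduce, via Jensen's inequality at the fixed point $x$, to a pointwise-plus-decay comparison of $\abs{f(y)}^{p(x)}$ with $\abs{f(y)}^{p(y)}$. Setting $t := \dashint_Q \abs{f}\,dy$, the hypothesis reads $t \leq K := K_1\max\{1, \abs{Q}^{-\beta}\}$. Since $p(x) \geq 1$, convexity of $s \mapsto s^{p(x)}$ and Jensen's inequality give
\begin{align*}
  t^{p(x)} \leq \dashint_Q \abs{f(y)}^{p(x)}\,dy,
\end{align*}
so it suffices to establish, for each $y \in Q$, a bound of the form
\begin{align*}
  \abs{f(y)}^{p(x)} \leq c\, \abs{f(y)}^{p(y)} + c\, (e + \abs{x})^{-m} + c\, (e + \abs{y})^{-m}.
\end{align*}

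For the pointwise estimate I would write $\abs{f(y)}^{p(x)} = \abs{f(y)}^{p(y)} \cdot \abs{f(y)}^{p(x) - p(y)}$ and bound the correction factor via $\log$-H{\"o}lder continuity. Since $\abs{x-y} \leq \diam Q$, local $\log$-H{\"o}lder gives $\abs{p(x)-p(y)} \lesssim 1/\log(e + 1/\diam Q)$, while the decay condition gives $\abs{p(x)-p_\infty},\abs{p(y)-p_\infty} \lesssim 1/\log(e + \min\{\abs{x},\abs{y}\})$. When $\abs{f(y)}$ is moderate, say bounded by a multiple of $K = K_1\abs{Q}^{-\beta}$, the estimate $\log K \lesssim \log K_1 + \beta\log(1/\abs{Q})$ shows that $\abs{p(x)-p(y)}\cdot \abs{\log\abs{f(y)}}$ is uniformly bounded in terms of $c_{\log}(p),\beta,K_1,p^+,n$, so the correction factor satisfies $\abs{f(y)}^{p(x)-p(y)} \leq c$ and we obtain the multiplicative bound $\abs{f(y)}^{p(x)} \leq c\,\abs{f(y)}^{p(y)}$.

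The remaining contribution splits into two sub-regimes. On the set where $\abs{f(y)} \geq 1$ but exceeds $K$, the multiplicative bound still applies once the $\log$-H{\"o}lder correction on $\abs{f(y)}^{p(x)-p(y)}$ is balanced against $\log(e+\abs{y})$ via the decay condition, since $\abs{f(y)}^{p(y)}$ is itself large enough to absorb the exponent discrepancy. On the set where $\abs{f(y)}$ is very small, one compares $\abs{f(y)}^{p(x)}$ directly to $(e+\abs{y})^{-m/p(x)}$ or $(e+\abs{x})^{-m/p(x)}$ and exploits $p(x) \geq p^- > 1$ together with $m > n$; the two decay terms in the statement are exactly what is needed to absorb these pieces after averaging in $y \in Q$.

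The hard part is the bookkeeping in the simultaneous small-and-far regime, where $\diam Q$ is small (so local $\log$-H{\"o}lder gives strong control) but $Q$ sits far from the origin (so the decay condition must supply the rest), and where the permitted average $K_1\abs{Q}^{-\beta}$ may itself be large. The refinement of \cite{DieSch13key} over \cite{DieHHR11} lies precisely in disentangling these two scales: by splitting the decay contribution between the factor $(e+\abs{x})^{-m}$ at the evaluation point and the averaged term $\dashint_Q(e+\abs{y})^{-m}\,dy$, one obtains an estimate uniform in the position and size of $Q$, and tolerant of the $K_1\abs{Q}^{-\beta}$ growth in the hypothesis. This tolerance is what the lemma will need in Section~\ref{sec:higher-integrability}, when it is applied on cubes of vanishing size.
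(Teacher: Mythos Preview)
The paper does not actually prove Lemma~\ref{lem:pjensen}; it is quoted from \cite[Theorem~1]{DieSch13key}. Independently of that, your attempt has a genuine gap.

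Applying Jensen's inequality at the exponent $p(x)$ commits you to the pointwise estimate
\begin{align*}
  \abs{f(y)}^{p(x)} \leq c\, \abs{f(y)}^{p(y)} + c\,(e+\abs{x})^{-m} + c\,(e+\abs{y})^{-m}
\end{align*}
with $c$ independent of $f$ and $y$. This fails whenever $p(x) > p(y)$ and $\abs{f(y)}$ is large: the ratio of the two sides behaves like $\abs{f(y)}^{p(x)-p(y)}$, which is unbounded. Your sentence ``on the set where $\abs{f(y)} \geq 1$ but exceeds $K$, the multiplicative bound still applies'' is precisely where the argument collapses. The hypothesis $\dashint_Q \abs{f}\,dy \leq K_1 \max\{1,\abs{Q}^{-\beta}\}$ constrains only the \emph{average} of $\abs{f}$; pointwise, $\abs{f(y)}$ may be arbitrarily large on a set of small measure. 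The decay condition gives at best $\abs{p(x)-p(y)} \leq c/\log(e+\abs{y})$, but $\abs{f(y)}^{c/\log(e+\abs{y})}$ is still unbounded unless you also know $\abs{f(y)} \leq (e+\abs{y})^{N}$ for some fixed $N$, and nothing in the hypotheses supplies that.

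The route taken in \cite{DieHHR11,DieSch13key} avoids this by applying Jensen at the exponent $p_Q^- := \inf_Q p$ rather than at $p(x)$. Then the pointwise comparison $\abs{f(y)}^{p_Q^-} \leq \abs{f(y)}^{p(y)}$ is automatic for $\abs{f(y)} \geq 1$, since $p_Q^- \leq p(y)$, and the dangerous regime disappears. The price is that one must now bound $t^{p(x)}$ against $t^{p_Q^-}$, i.e.\ control $t^{p(x)-p_Q^-}$ for $t := \dashint_Q \abs{f}$; this is exactly where the hypothesis $t \leq K_1\max\{1,\abs{Q}^{-\beta}\}$ combines with the local $\log$-H\"older estimate $p(x)-p_Q^- \leq c/\log(e+1/\diam Q)$ to produce a uniform constant. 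The two decay terms then enter only in the regime $\abs{f(y)} < 1$, via a further split according to whether $\abs{f(y)}$ lies above or below a suitable power of $(e+\abs{y})^{-1}$, much as you sketched for the ``very small'' case.
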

%
\begin{remark}
  Let us point out that Lemma~\ref{lem:pjensen} is valid for all
  functions $f \in L^1+L^\infty$, since
  \begin{align*}
    \dashint_{Q} \abs{f}\,dx &\leq 2\,\norm{f}_{L^1+L^\infty} \max
    \set{1, \abs{Q}^{-1}}.
  \end{align*}
  Since $L^\px+L^\infty \embedding L^1+L^\infty$,
  Lemma~\ref{lem:pjensen} is a stronger version of Theorem~4.2.4
  of~\cite{DieHHR11}, which proves the same result under the condition
  $f \in L^\px+L^\infty$.
%
\end{remark}
\begin{remark}
  \label{rem:dwK}
  Whenever $x \in Q$ satisfies $\abs{x}= \sup_{y\in Q}\abs{y}$, then
  by the geometry of $(e+\abs{\,\cdot\,})^{-m}$, we have
  \begin{align*}
    (e+\abs{x})^{-m}&\leq \dint_{Q}(e+\abs{y})^{-m}\,dy.
  \end{align*}
  In this case we can remove one term in the estimate of
  Lemma~\ref{lem:pjensen}. 
\end{remark}

Based on the new key estimate it was possible to prove an
refined version of the Sobolev
\Poincare{}inequality. See~\cite[Proposition~8.2.11]{DieHHR11} and \cite[Corollary~3]{DieSch13key} for a proof. 
\begin{proposition}[Sobolev Poincar{\'e}]
  \label{prop:sp}
  Let $Q \subset \Rn$ be a cube (or ball) with length~$R$, $p \in
  \PPln(Q)$ with $p^+<\infty$, and $f\in W^{1,\px}(Q)$ with
  $\norm{Df}_{L^{1}+L^{\infty}}\leq K_2$. For $s \in
  [1,\min\{\frac{n}{n-1}, \pmi\})$ and $m>n$ there exists a constant
  $c$ depending on $s,\pmi, c_{\log}(p),m,n, K_2$ for which
  \begin{align*}
    \dint_{Q}\left( \frac{\abs{f-\mean{f}_{Q_R}}}{R}\right) ^{\px} dx
    &\leq c\left( \dint_{Q}\abs{Df}^{\frac{p(\cdot)}{s}} \,dx\right)^s
    + c\dint_{Q}h \,dx,
  \end{align*}
  with
  \begin{align}
    \label{eq:h}
    h(x):=(e+\abs{x})^{-m}.
  \end{align}
\end{proposition}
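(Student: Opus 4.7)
My approach combines a classical pointwise Sobolev-Poincar{\'e} inequality at constant exponent $s$ with the $p(\cdot)$-Jensen inequality (Lemma~\ref{lem:pjensen}) to transfer from constant to variable exponent.

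First, from the Poincar{\'e}-Riesz representation
\[
|f(x)-\mean{f}_Q| \le c(n)\int_Q \frac{|Df(y)|}{|x-y|^{n-1}}\,dy, \qquad x\in Q,
\]
I would apply Jensen's inequality with the convex power $t\mapsto t^s$ against the probability measure proportional to $(R/|x-y|)^{n-1}\,dy$ on $Q$; its total mass $\int_Q (R/|x-y|)^{n-1}\,dy$ is comparable to $|Q|$ uniformly in $x\in Q$ since $n-1<n$. This yields the pointwise Sobolev-Poincar{\'e} bound at constant exponent
\[
\left(\frac{|f(x)-\mean{f}_Q|}{R}\right)^{s}\le c(n,s)\dashint_Q |Df(y)|^{s}\left(\frac{R}{|x-y|}\right)^{n-1}\,dy,
\]
valid for every $s\in[1,n/(n-1))$.

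Next, since $p^->s$ the rescaled exponent $\tilde p:=p/s$ lies in $\PPln(Q)$ with $c_{\log}(\tilde p)=s\,c_{\log}(p)$, $\tilde p^->1$, and $\tilde p^+<\infty$. I would raise the pointwise estimate to the power $p(x)/s\ge 1$ and then apply Lemma~\ref{lem:pjensen} with exponent $\tilde p$ to the $L^1$-average on the right, thereby pulling the variable exponent inside. The normalization hypothesis $\|Df\|_{L^1+L^\infty}\le K_2$ supplies the lemma's mean condition $\dashint_Q |Df|^s (R/|x-\cdot|)^{n-1}\,dy\le K_1\max\{1,|Q|^{-\beta}\}$ for suitable $K_1,\beta$ depending on $n,s,K_2$. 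Integration in $x\in Q$ followed by Fubini produces a double integral containing the weight $(R/|x-y|)^{(n-1)p(y)/s}$, which I would control uniformly in $y$ by a dyadic shell decomposition $\{|x-y|\sim 2^{-k}R\}_k$ together with the log-H{\"o}lder continuity of $p$; the resulting geometric series in $k$ converges thanks to $s<n/(n-1)$. The residual decay terms $(e+|x|)^{-m}$ from the key estimate assemble into the correction $c\dashint_Q h\,dy$.

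The main obstacle is precisely this final shell-decomposition step: a na{\"\i}ve application of Fubini to $(R/|x-y|)^{(n-1)p(y)/s}$ would diverge whenever $p(y)$ is large, and would only yield the weaker Poincar{\'e}-type bound $\dashint_Q|Df|^{p(\cdot)}\,dy$. Recovering the sharper Sobolev form $(\dashint_Q|Df|^{p(\cdot)/s}\,dy)^{s}$ requires exploiting the $\log$-H{\"o}lder property of $p$ to treat it as essentially constant on each annular shell, which is exactly where the refined version of the key estimate from~\cite{DieSch13key}, already formulated in Lemma~\ref{lem:pjensen}, plays its crucial role.
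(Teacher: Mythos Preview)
The paper does not give an in-text proof of this proposition; it refers to \cite[Proposition~8.2.11]{DieHHR11} and \cite[Corollary~3]{DieSch13key}. Compared with those arguments, your sketch has the right ingredients but the order of operations is wrong, and this creates a genuine gap. By applying Jensen at the constant exponent~$s$ \emph{before} the key estimate, you commit the power~$s$ to $|Df|$; when you then invoke Lemma~\ref{lem:pjensen} with exponent $p/s$ on $g_x(y)=|Df(y)|^s(R/|x-y|)^{n-1}$, the output is $g_x^{\,p(y)/s}=|Df(y)|^{p(y)}(R/|x-y|)^{(n-1)p(y)/s}$. The exponent on $|Df|$ is now $p(y)$, not $p(y)/s$, so at best you can reach $\dashint_Q|Df|^{p(\cdot)}$ --- you flag this yourself --- and no shell argument in~$x$ can undo that loss. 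Two auxiliary claims also fail. First, the $x$-integral of the weight $(R/|x-y|)^{(n-1)p(y)/s}$ over $Q$ diverges whenever $p(y)>ns/(n-1)$; the hypothesis $s<n/(n-1)$ only forces $ns/(n-1)<\big(\tfrac{n}{n-1}\big)^2$, which need not exceed $p^+$, and $\log$-H\"older continuity cannot help since the exponent sits at the \emph{fixed} point~$y$. Second, the mean-value hypothesis of Lemma~\ref{lem:pjensen} for $g_x$ does not follow from $\|Df\|_{L^1+L^\infty}\le K_2$: splitting $Df=a+b$ with $a\in L^1$, the term $\dashint_Q|a|^s(R/|x-\cdot|)^{n-1}\,dy$ is uncontrolled. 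The paper's own remark right after the proposition --- that one may replace $\|Df\|_{L^1+L^\infty}$ by ``the condition of Lemma~\ref{lem:pjensen} applied to $Df$'' --- confirms that the key estimate is meant to act on $Df$ itself, not on $|Df|^s$ times a singular kernel.

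The route in the references reverses your two moves. One decomposes the Riesz potential into dyadic balls $B_k=B(x,2^{-k}R)$ \emph{first}, applies Lemma~\ref{lem:pjensen} with exponent $p/s$ to the unweighted average $\dashint_{B_k}|Df|$ on each ball (so the hypothesis is exactly the one on $Df$), obtains $\dashint_{B_k}|Df|^{p(\cdot)/s}$ per shell, and resums the shells back into $c\,R^{-1}I_1\big(|Df|^{p(\cdot)/s}\chi_Q\big)(x)$ via $\sum_k 2^{-k}\dashint_{B_k}g\sim R^{-1}I_1 g(x)$. Only \emph{then} does one raise to the power~$s$ and integrate in~$x$; the Sobolev gain comes from the classical weak-type bound $I_1\colon L^1\to L^{n/(n-1),\infty}$, which after truncation gives $\dashint_Q(R^{-1}I_1 g)^s\,dx\le c\,(\dashint_Q g)^s$ precisely in the range $s<n/(n-1)$.
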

It is possible to replace the condition
$\norm{Df}_{L^1+L^\infty}$ by the condition of Lemma~\ref{lem:pjensen}
applied to~$Df$.

Now, the reverse H{\"o}lder estimate follows directly by
Lemma~\ref{lem:cacc} and Proposition~\ref{prop:sp}.
\begin{lemma}[Reverse H{\"o}lder estimate]
  \label{lem:rh}
  Let $Q\subset\Rn$ be a cube (or ball) and $p\in \PPln(2Q)$. For a
  local weak solution $u\in W^{1,\px}(2Q)$ of \eqref{eq:difeq} we have
  \begin{align*}
    \dint_{{Q}}\abs{Du}^{\px} \,dx \leq c\Bigg( \dint_{2{Q}}\abs{Du}^{\px/s}
    \,dx \Bigg)^s+c\dint_{2{Q}}\abs{G}^{\px} \,dx+c\dint_{2{Q}}h \,dx,
  \end{align*}
  for all $s \in [1, \min\{p^-,\frac{n}{n-1}\})$. The constant depends
  on $\ppl,c_{\log}(p),m,n,s$ and $\norm{Du}_{L^\infty+L^1(2Q)}$.
\end{lemma}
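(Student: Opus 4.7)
The plan is to chain the Caccioppoli estimate of Lemma~\ref{lem:cacc} with the Sobolev-\Poincare{} inequality of Proposition~\ref{prop:sp}; as the paper itself remarks, the reverse H{\"o}lder estimate ``follows directly.'' The real content of the variable-exponent analysis has been pushed into those two preceding results and into the $\px$-Jensen inequality of Lemma~\ref{lem:pjensen}, so what remains here is assembly.

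Step~1. Apply Lemma~\ref{lem:cacc} to the cube $Q$ of side length $R$. Dividing both sides by $\abs{Q}$ and using $\abs{2Q}=2^n\abs{Q}$ turns the integrals into averages and produces
\begin{align*}
\dint_Q \abs{Du}^\px\,dx \leq c\dint_{2Q}\biggabs{\frac{u-\mean{u}_{2Q}}{R}}^\px dx + c\dint_{2Q}\abs{G}^\px\,dx,
\end{align*}
where the constant depends only on $n$ and $p^+$.

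Step~2. Apply Proposition~\ref{prop:sp} to $f=u$ on the cube $2Q$ (side length $2R$). Since the proposition normalizes by the side length of the cube on which it is invoked, one rewrites $1/R = 2/(2R)$ and absorbs the resulting factor $2^\px \leq 2^{p^+}$ into the constant. The hypothesis $\norm{Df}_{L^1+L^\infty}\leq K_2$ is met by setting $K_2=\norm{Du}_{L^\infty+L^1(2Q)}$, which is precisely the dependency recorded in the stated constant; the $\log$-H{\"o}lder regularity of $p$ on $2Q$ is inherited from the assumption $p\in\PPln(2Q)$. For any $s\in[1,\min\{p^-,n/(n-1)\})$ the proposition then yields
\begin{align*}
\dint_{2Q}\biggabs{\frac{u-\mean{u}_{2Q}}{R}}^\px dx \leq c\biggl(\dint_{2Q}\abs{Du}^{\px/s}\,dx\biggr)^s + c\dint_{2Q} h\,dx.
\end{align*}
Substituting this bound into the display from Step~1 yields the claimed reverse H{\"o}lder inequality.

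There is no substantive analytic obstacle at this stage; the only items requiring care are bookkeeping details already mentioned, namely the harmless geometric rescaling from $R$ to $2R$ and the verification that the hypotheses of Proposition~\ref{prop:sp} transfer to $2Q$. The genuine difficulties have been paid for earlier: the $\px$-structure of the Caccioppoli test function in Lemma~\ref{lem:cacc} and, more significantly, the key estimate underpinning Proposition~\ref{prop:sp}, which is what allows the exponent $\px$ to be passed across an average in the Sobolev-\Poincare{} step.
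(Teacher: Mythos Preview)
Your proposal is correct and matches the paper's approach exactly: the paper gives no explicit proof but states that the reverse H{\"o}lder estimate ``follows directly by Lemma~\ref{lem:cacc} and Proposition~\ref{prop:sp},'' and that is precisely the two-step chaining you carry out, including the only nontrivial bookkeeping point (the $R$ versus $2R$ rescaling and the transfer of the $\norm{Du}_{L^1+L^\infty}$ hypothesis).
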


Let us restate Gehring's Lemma at this
point~\cite[Section~4]{Iwa98}.
\begin{theorem}[Gehring's lemma]
  \label{thm:oldgehring} 
  Let $f\in L^s(\Omega)$ and $g\in L^{q}(\Omega)$.
  If the reverse H{\"o}lder inequality
  \begin{align*}
    \Bigg(\dint_{Q} \abs{f}^s \,dx\Bigg)^\frac{1}{s}\leq
    C_{Ge}\dint_{2Q}\abs{f} \,dx+\Bigg(\dint_{2Q}\abs{g}^s
    \,dx\Bigg)^\frac{1}{s}
  \end{align*}
  is satisfied for an $s>1$ and all $2Q\subset\Omega$, then there
  exists an $m_0>1$ depending on $c_{Ge}, s,q$ and the dimension such
  that
  \begin{align*}
    \Bigg(\dint_{Q} \abs{f}^{s\mu} \,dx\Bigg)^\frac{1}{s\mu}\leq
    c\Bigg(\dint_{2Q}\abs{f}^s
    \,dx\Bigg)^\frac{1}{s}+c\Bigg(\dint_{2Q}\abs{g}^{s\mu}
    \,dx\Bigg)^\frac{1}{s\mu},
  \end{align*}
  for all $1<\mu<m_0$, all $2Q\subset\Omega$. The constant $c$ depends
  on $n,s,C_{Ge}$.
\end{theorem}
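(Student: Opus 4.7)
The plan is the classical Gehring argument: combine a Calder\'on--Zygmund stopping-time decomposition with the reverse H\"older hypothesis to derive a level-set inequality, and then integrate it against $\lambda^{\mu-2}\,d\lambda$ to gain the self-improved integrability. Setting $F := \abs{f}^{s}$ and $G := \abs{g}^{s}$, the hypothesis rewrites as
\[
\dint_{Q} F\,dx \leq c_{1}\Bigl(\dint_{2Q} F^{1/s}\,dx\Bigr)^{s} + c_{1}\dint_{2Q} G\,dx,
\]
and the conclusion is equivalent to the local estimate $(\dint_{Q} F^{\mu}\,dx)^{1/\mu}\leq c\,\dint_{2Q} F\,dx + c(\dint_{2Q} G^{\mu}\,dx)^{1/\mu}$. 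Fix a cube $Q_{0}$ with $2Q_{0}\subset\Omega$ and set $\lambda_{0}:= \dint_{2Q_{0}} F\,dx$.

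For every $\lambda > \lambda_{0}$, a dyadic Calder\'on--Zygmund stopping-time argument inside $Q_{0}$ produces pairwise disjoint maximal dyadic subcubes $\{Q_{i}\}$ with $\lambda < \dint_{Q_{i}} F \leq 2^{n}\lambda$ and $F\leq \lambda$ almost everywhere on $Q_{0}\setminus\bigcup_{i} Q_{i}$; the dyadic structure ensures $2Q_{i}\subset 2Q_{0}\subset\Omega$, so the hypothesis applies on each $Q_{i}$. Since $\dint_{Q_{i}} F > \lambda$, one of the two alternatives $c_{1}(\dint_{2Q_{i}} F^{1/s})^{s}\geq \lambda/2$ or $c_{1}\dint_{2Q_{i}} G\geq \lambda/2$ must hold. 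I would split $F^{1/s}$ (resp.\ $G$) at the threshold $\theta\lambda$ (resp.\ $\eta\lambda$), choosing $\theta,\eta$ so small that the ``low'' part is absorbed; this gives either $\lambda^{1/s}\abs{Q_{i}}\leq c\int_{2Q_{i}\cap\{F>\theta\lambda\}} F^{1/s}\,dx$ or $\lambda\abs{Q_{i}}\leq c\int_{2Q_{i}\cap\{G>\eta\lambda\}} G\,dx$. Combining with $\int_{Q_{i}} F\leq 2^{n}\lambda\abs{Q_{i}}$ and summing (using the bounded overlap of $\{2Q_{i}\}$, inherited from the disjointness of $\{Q_{i}\}$) yields the good-$\lambda$ inequality
\[
\int_{Q_{0}\cap\{F>\lambda\}} F\,dx \leq c_{2}\lambda^{1-1/s}\!\!\int_{2Q_{0}\cap\{F>\theta\lambda\}}\!\!\! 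F^{1/s}\,dx + c_{2}\!\!\int_{2Q_{0}\cap\{G>\eta\lambda\}}\!\!\! G\,dx,
\]
valid for all $\lambda>\lambda_{0}$.

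Multiplying this inequality by $\lambda^{\mu-2}$ and integrating in $\lambda\in(\lambda_{0},\infty)$, Fubini converts both sides into integrals of $F^{\mu}$ and $G^{\mu}$, and after rearranging one arrives at
\[
\int_{Q_{0}} F^{\mu}\,dx \leq \frac{c_{3}(\mu-1)\,\theta^{1/s-\mu}}{\mu-1/s}\!\int_{2Q_{0}} F^{\mu}\,dx + c_{3}\,\eta^{1-\mu}\!\int_{2Q_{0}} G^{\mu}\,dx + c_{3}\,\lambda_{0}^{\mu-1}\!\int_{Q_{0}} F\,dx.
\]
The coefficient of $\int_{2Q_{0}} F^{\mu}$ is $O(\mu-1)$ with $\theta$ held fixed, hence smaller than any prescribed constant as soon as $\mu-1 < m_{0}-1$ for some $m_{0}>1$ depending only on $s$, $C_{Ge}$ and $n$; this defines the admissible range of $\mu$. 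Dividing by $\abs{Q_{0}}$ and using $\lambda_{0}^{\mu-1}\dint_{Q_{0}} F \leq c(\dint_{2Q_{0}} F)^{\mu}$ brings the last term into the shape appearing in the conclusion.

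The main obstacle is making the absorption step rigorous. The plan is to first truncate $F$ at height $N$ and run the entire argument for $F_{N}:=\min\{F,N\}$, so that the $F^{\mu}$ integrals are a priori finite; next to handle the discrepancy between $\int_{Q_{0}}$ on the left and $\int_{2Q_{0}}$ on the right by iterating the estimate over a one-parameter family of nested cubes $(1+t)Q_{0}$, $t\in[1,2]$, and invoking the standard Gehring iteration lemma; and finally to let $N\to\infty$. The tight balancing of $\theta,\eta$ (small, so that the splittings actually gain mass) against the prefactor $(\mu-1)$ produced by the $\lambda$-integration (which must dominate the blow-ups $\theta^{1/s-\mu}$ and $\eta^{1-\mu}$) is the analytic heart of the argument, and it is exactly what forces the upper threshold $m_{0}$.
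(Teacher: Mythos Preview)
The paper does not actually prove this theorem: it merely restates Gehring's lemma as a known result and cites \cite[Section~4]{Iwa98} for the proof, so there is no in-paper argument to compare against. Your sketch follows the classical route (Calder\'on--Zygmund stopping time, good-$\lambda$ inequality, integration in $\lambda$, absorption via truncation and iteration over nested cubes), which is precisely the approach developed in the cited reference and in the standard literature on reverse H\"older inequalities; in that sense your proposal is consistent with what the paper implicitly relies on.
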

The following corollary is a consequence of Gehring's Lemma and
Lemma~\ref{lem:rh}.
\begin{corollary}
  \label{cor:gehring}
  Let $p\in \PPln(\Omega)$, $\abs{G}^\px \in L^q(\Omega)$ with $q >
  1$. Let $u\in W^{1,\px}(\Omega)$ be a local weak solution of
  \eqref{eq:difeq}. Then
  there exists $m_0\in (1,q]$ such that for all cubes $Q$ with
  $2Q\subset\Omega$ and all $\mu \in [1,m_0]$ there holds
  \begin{align*}
    \Bigg(\dint_{{Q}}\abs{Du}^{\px \mu} \,dx\Bigg)^\frac{1}{\mu} \leq
    c\dint_{2{Q}}\abs{Du}^{\px} \,dx + c\Bigg( \dint_{2{Q}}\abs{G}^{\px\mu}
    \,dx+\dint_{2{Q}}h^\mu \,dx \Bigg)^\frac{1}{\mu},
  \end{align*}
  where $h(x) = (e+\abs{x})^{-m}$ with $m>n$. The constant~$c$
  depends on $n,c_{\log}(p),p^-$, $p^+$, $m$ and
  $\norm{Du}_{L^1+L^\infty(\Omega)}$.
\end{corollary}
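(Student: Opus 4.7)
The strategy is the standard one for promoting a reverse H\"older inequality to higher integrability, with the variable exponent absorbed into the function on both sides. The starting point is Lemma~\ref{lem:rh}; the task is to bring it into the constant-exponent form required by Theorem~\ref{thm:oldgehring}, apply Gehring, and then translate back.

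First I fix $s \in (1, \min\{\pmi, n/(n-1)\})$ as in Lemma~\ref{lem:rh} and introduce the auxiliary functions
\begin{align*}
  f := \abs{Du}^{\px/s}, \qquad g := \bigl(\abs{G}^{\px} + h\bigr)^{1/s}.
\end{align*}
The pointwise identity $\abs{Du}^{\px/s} = (\abs{Du}^{\px})^{1/s}$ is the key that makes the reduction work: Lemma~\ref{lem:rh} then reads
\begin{align*}
  \dint_Q f^s\,dx \leq c\, \Bigl(\dint_{2Q} f\,dx\Bigr)^s + c\, \dint_{2Q} g^s\,dx,
\end{align*}
and taking the $s$-th root (using $(a+b)^{1/s} \le a^{1/s}+b^{1/s}$) yields precisely the hypothesis of Theorem~\ref{thm:oldgehring}. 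Since $\abs{G}^\px \in L^q(\Omega)$ and $h = (e+\abs{\cdot})^{-m} \in L^\infty \cap L^1$, we have $g \in L^{sq}_{\loc}(\Omega)$, so Gehring's lemma furnishes a threshold $m_0^{\text{Ge}}>1$ (depending only on $C_{Ge}$, $s$ and $q$) such that
\begin{align*}
  \Bigl(\dint_Q f^{s\mu}\,dx\Bigr)^{\!1/(s\mu)} \leq c \Bigl(\dint_{2Q} f^s\,dx\Bigr)^{\!1/s} + c \Bigl(\dint_{2Q} g^{s\mu}\,dx\Bigr)^{\!1/(s\mu)}
\end{align*}
for every $1<\mu<m_0^{\text{Ge}}$ and every $2Q\subset\Omega$.

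I then set $m_0 := \min\{m_0^{\text{Ge}},q\}\in(1,q]$, which guarantees that $g\in L^{s\mu}_{\loc}$ throughout the range. Raising the display above to the $s$-th power (via $(a+b)^s\le c(a^s+b^s)$) and undoing the substitutions through $f^{s\mu} = \abs{Du}^{\px\mu}$ and $g^{s\mu} = (\abs{G}^\px+h)^\mu \le c(\abs{G}^{\px\mu}+h^\mu)$ produces the claimed inequality for every $\mu\in(1,m_0]$. The borderline case $\mu=1$ reduces to Lemma~\ref{lem:rh} after applying Jensen's inequality $(\dint_{2Q} \abs{Du}^{\px/s})^s \le \dint_{2Q}\abs{Du}^\px$.

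There is no deep obstacle; the proof is essentially bookkeeping. The only points demanding care are (i) the exponent matching in $f=\abs{Du}^{\px/s}$, which is forced by the structure of Lemma~\ref{lem:rh}, and (ii) tracking the dependence of the Gehring constant on $\ppl, c_{\log}(p), p^-, m$ and on $\norm{Du}_{L^1+L^\infty(\Omega)}$ inherited from Lemma~\ref{lem:rh} (itself inherited from Proposition~\ref{prop:sp}). The decay term $h^\mu$ survives the argument because $m>n$, so $h^\mu$ remains integrable for every $\mu\ge 1$.
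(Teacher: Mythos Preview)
Your proposal is correct and is exactly the argument the paper has in mind: the corollary is stated without proof, merely as ``a consequence of Gehring's Lemma and Lemma~\ref{lem:rh}'', and your substitution $f=\abs{Du}^{\px/s}$, $g=(\abs{G}^\px+h)^{1/s}$ is the canonical way to effect that reduction. One cosmetic point: Theorem~\ref{thm:oldgehring} is stated for $1<\mu<m_0^{\text{Ge}}$ (strict inequality), so to obtain the closed range $\mu\in[1,m_0]$ you should take $m_0$ strictly below $m_0^{\text{Ge}}$, e.g.\ $m_0:=\min\{\tfrac12(1+m_0^{\text{Ge}}),q\}$, rather than $\min\{m_0^{\text{Ge}},q\}$.
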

\begin{remark}
  \label{rem:scaling}
  By a standard covering argument it is possible to replace the pair $(Q,2Q)$
  in Lemma~\ref{lem:cacc}, Lemma~\ref{lem:rh},
  Theorem~\ref{thm:oldgehring} and Corollary~\ref{cor:gehring} by
  $(Q,aQ)$ for any $a>1$. The constant then depends on~$a$.
\end{remark}

\section{Higher integrability}
\label{sec:higher-integrability}
\noindent
In this section we prove the higher integrability of our local
solutions. We will derive a local result, with controllable dependence on the size of the ball,  Theorem \ref{thm:local}; such that the global result follows as a corollary. For better readability we split the section into several
parts. Firstly, we recall the technique of redistributional estimates
(good~$\lambda$-estimates). Secondly, we split the corresponding level
sets into cubes. Thirdly, we define a local comparison problem of
$p$-Laplace type with constant exponent. Fourth, we derive estimates
controlling the distance of the local auxiliary problem to our
original system. This enables us in our fifth step to proof our main
result of higher integrability.

\subsection{Redistributional Estimate}
\label{ssec:red}

The higher integrability of our solutions will be achieved by
redistributional estimates also known as good-$\lambda$-estimates. Let
us briefly describe this well known technique: Assume $f$ and $g$ to
be integrable, non-negative functions.  Moreover, assume that the
following redistributional estimate holds: There exists $\kappa>1$, $\epsilon>0$ and
$\delta=\delta(\epsilon)>0$ with $\delta(\epsilon)\to 0$ for
$\epsilon\to 0$ such that for all $\lambda>0$
\begin{align}
  \label{eq:gl}
  \abs{\{\abs{f}>\kappa\lambda\}\cap\{\abs{g}\leq\epsilon\lambda\}}\leq
  \delta\abs{\{\abs{f}>\lambda\}}.
\end{align}
A direct consequence of this estimate is
\begin{align*}
  \abs{\{\abs{f}>\kappa\lambda\}}\leq \delta\abs{\{\abs{f}>\lambda\}}
  + \abs{\set{\abs{g}>\epsilon\lambda}},
\end{align*}
which basically shows that the level sets of $f$ can be controlled in
a certain sense by the ones of~$g$. (Later in our setting we will
choose $\kappa=2^{n+1} c_4$ with $c_4$ from~\eqref{eq:diffAV}).
Multiplying this estimate by $\lambda^{q-1}$ with $q \in [1,\infty)$
and integrating over $\lambda \in (0,\infty)$ gives for suitable small
$\delta$ (formally)
\begin{align*}
  \int_0^\infty \lambda^q \abs{\{\abs{f}>\lambda\}}\,d\lambda\leq
  c\,\int_0^\infty \lambda^q \abs{\{\abs{g}>\lambda\}} \,d\lambda,
\end{align*}
where $c$ depends on $\kappa$, $\epsilon$ and $q$. In other
words,
\begin{align*}
  \norm{f}_q &\leq c\, \norm{g}_q. 
\end{align*}
We will apply this argument to the functions $f=\Ms(\abs{Du}^\px)$ and
$g=\Mss(\abs{G}^\px+h)$, where $\Ms$ and $\Mss$ are localized, dyadic
maximal operators which we will introduce below and $h(x) := (e+
\abs{x})^{-2n}$.


\subsection{Maximal operators and coverings}
\label{ssec:max}

Let us introduce the localized maximal operators.  By $\Delta$ we
denote the standard set of (open) dyadic cubes $(2^k a) + (0,2^k)^n$
with $k \in \setZ$ and $a \in \setZ^n$.  Now, take an arbitrary, open
cube $Q' \subset \Rn$ and let $T\,:\,\Rn\to \Rn$ be a linear mapping,
that maps $(0,1)^n$ onto $Q'$. Then cubes $\set{T(Q)\,:\, Q \in
  \Delta}$ are called the $Q'$-dyadic cubes.  By $\Delta_{Q'}$ we
denote the $Q'$-dyadic sub-cubes of~$Q'$, i.e.  $\Delta_{Q'} := \set{
  T(Q) \,:\, T(Q) \subset Q', Q \in \Delta}$. Note that two dyadic
cubes from $\Delta_{Q'}$ are either disjoint or one is a subset of the
other. The predecessor of a $Q'$-dyadic cube $Q$ is the unique
$Q'$-dyadic cube $Q^\pre$, which contains~$Q$ and has double the
diameter of~$Q$.

For $s \in [1,\infty)$ we define for all~$x \in \Rn$
\begin{align*}
  (M^*_{Q',s} f)(x)&:=\sup_{Q \in \Delta_{Q'}\,:\, x \in Q}
  \bigg(\dint_{2Q}\abs{f}^s\,dz \bigg)^{\frac 1s},
  \\
  (M^*_{Q'} f)(x)&:= (M^*_{Q',1} f)(x),
\end{align*}
where the supremum is taken over all $Q'$-dyadic sub-cubes of~$Q'$
which contain~$x$ in its closure. In particular, $M^*_{Q',s}f$ is zero
outside of $Q'$ and depends only on the values of~$f$
on~$2Q'$.  It is well known that $M_{Q',s}$ is bounded from $L^q(2Q')$
to $L^q(\Rn)$ for $q>s$ and from $L^s(\Rn)$ to the Marcinkiewicz space
$L^{s,\infty}(\Rn)$.
 
From now on we fix an open cube~$\Omega \subset \Rn$ such that $u$ is
a weak local solution of~\eqref{eq:difeq} on $2 \Omega$. Our goal is
to prove higher integrability of $\abs{Du}^{\px}$ on $\Omega$.

We define our level sets
%
\begin{align}
  \label{eq:OlUl}
  \begin{aligned}
    \mathcal{O}_\lambda&:=\{\Ms(\abs{Du}^\px)>\lambda\} \subset
    \Omega,
    \\
    \Uel&:=\{\Ms(\abs{Du}^\px)>\kappa \lambda,
    \,\Mss(\gh)\leq\epsilon\lambda\} \subset \Omega.
  \end{aligned}
\end{align} 
Our goal is to show
\begin{align}
  \label{eq:glb}
  \abs{\Uel} \leq \delta\, \abs{\mathcal{O}_\lambda}
\end{align}
with $\delta=\delta(\epsilon)\to 0$ for $\epsilon\to 0$. This together
with the arguments similar to the ones in the subsection will give the
desired result of higher integrability.

Since we are mainly interested in a result of local higher
integrability, it suffices to consider~\eqref{eq:glb} for large values
of~$\lambda$. In particular, we define
\begin{align}
  \label{eq:lambda0}
  \lambda_0 := \dashint_{2\Omega} \abs{Du}^\px \,dx.
\end{align}
We can assume without loss of generality that $\lambda_0>0$, since
otherwise $u$ is locally a constant.

To prove~\eqref{eq:glb} we will decompose $\mathcal{O}_\lambda$ into
suitable dyadic cubes, which we will construct now. For every $x \in
\mathcal{O}_\lambda$, there exists a largest $\Omega$-dyadic
cube~$Q_x$ with the property $\dashint_{2Q_x} \abs{Du}^\px
\,dx>\lambda$. In particular, any $Q' \in \Delta_\Omega$ with $Q'
\supsetneq Q_x$ satisfies $\dashint_{2Q'} \abs{Du}^\px \,dx\leq
\lambda$. 

The family~$\set{Q_x \,:\, x \in \mathcal{O}_\lambda}$ covers the
set~$\mathcal{O}_\lambda$. Since the dyadic cubes have a natural order
(if two dyadic cubes intersect, one of them contains the other), the
sub-family of maximal cubes still covers~$\mathcal{O}_\lambda$. We
denote this at most countable sub-family by $\set{Q_j}$. In
particular, we have
\begin{align*}
  \lambda &< \dashint_{2Q_j} \abs{Du}^\px \,dx.  
\end{align*}
Since $\lambda \geq \lambda_0$, we have
\begin{align*}
  \dashint_{2\Omega} \abs{Du}^\px \,dx = \lambda_0 \leq \lambda
  < \dashint_{2Q_j} \abs{Du}^\px \,dx \leq
  \frac{\abs{\Omega}}{\abs{Q_j}} \dashint_{2\Omega} \abs{Du}^\px \,dx.
\end{align*}
This implies $\abs{Q_j} < \abs{\Omega}$. In particular, we
know that $Q_j$ is a proper $\Omega$-dyadic sub-cube of~$\Omega$. Let
$Q_j^\pre$ denote the $\Omega$-dyadic predecessor of~$Q_j$.  Then
$Q_j^\pre \in \Delta_\Omega$. Since the $Q_j$ (former $Q_x$) were
chosen to be maximal we have $\dashint_{2Q_j^\pre} \abs{Du}^\px \,dx
\leq \lambda$. This and $3Q_j \subset 2Q_j^\pre$ implies
\begin{align}
  \label{eq:Qjlambda}
  \lambda &< \dashint_{2Q_j} \abs{Du}^\px \,dx \leq \frac{3^n}{2^n}
  \dashint_{3Q_j} \abs{Du}^\px \,dx \leq 2^{n} \dashint_{2Q_j^\pre}
  \abs{Du}^\px \,dx \leq 2^n \lambda.
\end{align}

Our goal was to prove the estimate $\abs{\Uel} \leq
\delta\, \abs{\mathcal{O}_\lambda}$.
Since the dyadic $Q_j$ cover the set $\mathcal{O}_\lambda$ it suffices
to prove
\begin{align}
  \label{eq:glb2}
  \abs{Q_j \cap \Uel} \leq \delta\, \abs{Q_j}.
\end{align}
We prove this estimate in the next section.  This estimate is obvious
if $Q_j \cap \Uel = \emptyset$. Therefore, we will
assume in the following, that
\begin{align*}
  Q_j \cap \Uel \not=\emptyset.
\end{align*}
In this case we find $x_j \in Q_j \cap \Uel$ with
$\Mss(\gh)(x_j) \leq \epsilon\lambda$, which implies
\begin{align}
  \label{eq:QjGlambda}
  \bigg( \dashint_{3Q_j} \gh^{m_0} \,dx \bigg)^{\frac{1}{m_0}} &\leq
  \bigg( \frac{3^n}{2^n} \dashint_{2Q_j^\pre} \gh^{m_0}\,dx
  \bigg)^{\frac{1}{m_0}}\leq \frac{3^n}{2^n} \epsilon \lambda.
\end{align}
By Corollary~\ref{cor:gehring}, Remark~\ref{rem:scaling},
\eqref{eq:Qjlambda} and~\eqref{eq:QjGlambda} we have
\begin{align}
  \label{eq:Qjpmlambda}
  \begin{aligned}
    \bigg(\dashint_{2Q_j} \abs{Du}^{\px m_0} \,dx
    \bigg)^{\frac{1}{m_0}} &\leq c \dashint_{3Q_j} \abs{Du}^\px \,dx
    + c\, \bigg(\dashint_{3Q_j} \!\!\!\gh^{m_0} dx \bigg)^{\frac{1}{m_0}}
    \\
    &\leq c\, \lambda.
  \end{aligned}
\end{align}


\subsection{Comparison Problem}
\label{ssub:compp}

If the right hand side $G$ of our system is locally zero and $p$ is
locally constant, then $u$ is locally a $p$-harmonic function with
all its nice regularity properties. If $G$ is non-zero but ``small'',
then $u$ is still close to a $\px$-harmonic function (where $G=0$).
This allows to transfer some of the regularity results from the
$\px$-harmonic function to~$u$. This is the well known comparison
principle. Unfortunately, $\px$-harmonic functions do not have as nice
regularity properties as $p$-harmonic functions with constant~$p$; so
it makes more sense to compare $u$ to a $p_i$-harmonic function, where
$p_j$ is a local constant approximation of~$\px$ on $Q_j$. Certainly, for a good
comparison $p$ should not vary to much.

In the following we will define our comparison system.  For every cube
$Q_j$ let $y_j$ denote a point of $2Q_j$ furthest away from the point
zero, i.e we choose $y_j \in \overline{2Q_j}$ such that $\abs{y_j} =
\sup_{x\in 2Q_j} \abs{x}$. Now, define $p_j := p(y_j)$ as an
approximation of~$p$.
%
We define our comparison system by
\begin{align}
  \label{eq:lhp}
  \begin{aligned}
    \int_{2Q_j }A_j(Dw_j)D\phi \,dx&=0\text{ for all }\phi\in
    W^{1,p_j}_0(2Q_j)
    \\
    w_j&=u\text{ on }\partial (2Q_j),
  \end{aligned}
\end{align}
where $A_j(Dw_j) := \abs{Dw_j}^{p_j-2}Dw_j$. We are looking for
solutions~$w_j$ in $W^{1,p_j}(2Q_j)$.

Note that for small cubes~$Q_j$ the choice of $p_j$ is not important
and one could take any $p(x)$ with $x \in Q_j$. For example Acerbi and
Mingione used in~\cite{AceM05} the choice $p_{Q_j}^+$. However, for
large cubes this is not a good choice. It is more reasonable to take
an exponent, which is close to the average of~$p$ over~$Q_j$ (actually
the best choice is the average defined by the reciprocal). Due to the
$\log$-H{\"o}lder continuity of~$p$ our choice of~$p_j$ has this property.

Note that it is a~priori not clear that our comparison
system~\eqref{eq:lhp} is well defined. This is due to the fact, that
the stated boundary condition on~$w_j$ requires $u \in
W^{1,p_j}(2Q_j)$. Since $p_j$ might be bigger than $\px$ at some parts
of $2Q_j$, this does not follow from $u \in W^{1,\px}(2Q_j)$. However,
it follows from Corollary~\ref{cor:gehring} that $u \in
W^{1,m_0\px}(2Q_j)$ for some $m_0 > 1$. So if $\px$ does not vary to
much on $2Q_j$ in the sense that $m_0 p^-_{2Q_j} \geq p^+_{2Q_j}$, then
$m_0p(x) \geq p_j$. This implies $u \in W^{1,p_j}(2Q_j)$ and our
comparison system~\eqref{eq:lhp} is well defined. It is now standard
that the system has a unique solution $w_j \in u + W^{1,p_j}_0(2Q_j)$.

We will later have a similar problem, when passing back from $w_j$ to
$u$, since $w_j \in W^{1,p_j}(2Q_j)$ is not enough to deduce $w_j \in
W^{1,\px}(2Q_j)$. For this, we need to control of $w_j$ in the space
$W^{1,p_{2Q_j}^+}(2Q_j)$. This is possible due to the following result of higher integrability (up to the boundary) \cite[Theorem~1.1]{KilKos94}. More precisely we use a quantitative estimate which is a consequence from (3.4) and Lemma~2.4 in this work.
\begin{theorem}
  \label{thm:hp2}
  There exists a $m_1>1$ such that for all $m_1\geq \mu\geq 1$ the following holds.  If $u\in W^{1,\mu p_j}(2Q_j)$ and $w_j$ is the
  solution of~\eqref{eq:lhp}, then there exists a constant $c$
  depending on $p_j$ such that
  \begin{align*}
    \dint_{2Q_j}\abs{Dw_j}^{\mu p_j} \,dx\leq
    c\dint_{2Q_j}\abs{Du}^{\mu p_j} \,dx.
  \end{align*}
\end{theorem}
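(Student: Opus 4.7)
The plan is to follow the Meyers-Elcrat higher-integrability scheme adapted to non-zero boundary data: prove a reverse Hölder inequality for $|Dw_j|^{p_j}$ uniformly up to $\partial(2Q_j)$ and then invoke Gehring's lemma (Theorem~\ref{thm:oldgehring}). The base case $\mu=1$ is the standard energy identity: since $w_j-u\in W^{1,p_j}_0(2Q_j)$ is admissible in~\eqref{eq:lhp}, one has
\begin{align*}
  \int_{2Q_j}|Dw_j|^{p_j}\,dx
  =\int_{2Q_j}|Dw_j|^{p_j-2}Dw_j\cdot Du\,dx,
\end{align*}
and Young's inequality together with absorption yields $\int_{2Q_j}|Dw_j|^{p_j}\,dx\le c\int_{2Q_j}|Du|^{p_j}\,dx$.

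For the reverse Hölder estimate I would distinguish two types of balls $B_r$ with center in $\overline{2Q_j}$. On an interior ball with $B_{2r}\subset 2Q_j$ I would test~\eqref{eq:lhp} with $\phi=\eta^{p_j}(w_j-\mean{w_j}_{B_{2r}})$, where $\chi_{B_r}\le\eta\le\chi_{B_{2r}}$ and $|D\eta|\le c/r$; the standard Caccioppoli manipulation followed by the classical Sobolev-Poincaré inequality with mean value zero yields
\begin{align*}
  \bigg(\dint_{B_r}|Dw_j|^{p_j}\,dx\bigg)^{1/p_j}
  \le c\bigg(\dint_{B_{2r}}|Dw_j|^{p_j/s}\,dx\bigg)^{s/p_j}
  +c\bigg(\dint_{B_{2r}}|Du|^{p_j}\,dx\bigg)^{1/p_j},
\end{align*}
with $s=\min\{n/(n-1),p_j\}>1$. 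On a boundary ball with $B_{2r}\not\subset 2Q_j$ I would extend $\tilde u:=w_j-u$ by zero outside $2Q_j$ and test~\eqref{eq:lhp} with $\phi=\eta^{p_j}\tilde u\in W^{1,p_j}_0(2Q_j)$; since $\tilde u$ vanishes on the portion $B_{2r}\setminus 2Q_j$ of positive measure, a zero-trace Sobolev-Poincaré inequality applies on all of $B_{2r}$ without subtracting a mean value, producing the same reverse Hölder inequality for the extended function $|Dw_j|\chi_{2Q_j}$.

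Applying Gehring's lemma to $f:=|Dw_j|^{p_j/s}\chi_{2Q_j}$ with right-hand side $|Du|^{p_j/s}\chi_{2Q_j}$ on every ball in $\Rn$ then delivers a threshold $m_1>1$ such that for all $1\le\mu\le m_1$
\begin{align*}
  \bigg(\dint_{B}|Dw_j|^{\mu p_j}\chi_{2Q_j}\,dx\bigg)^{1/\mu}
  \le c\dint_{2B}|Dw_j|^{p_j}\chi_{2Q_j}\,dx
  +c\bigg(\dint_{2B}|Du|^{\mu p_j}\chi_{2Q_j}\,dx\bigg)^{1/\mu}
\end{align*}
on every ball $B\subset\Rn$. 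Choosing $B\supset 2Q_j$ and using the $\mu=1$ base estimate to absorb the first term on the right yields the stated bound. The main obstacle is the boundary case: the whole argument hinges on the boundary condition $w_j=u$ on $\partial(2Q_j)$, which makes $\tilde u$ zero-extendable and removes the need for a mean value in the Poincaré step on balls touching $\partial(2Q_j)$. Without this extension-by-zero device the Gehring mechanism produces only interior higher integrability, which is insufficient for the global bound claimed in the theorem.
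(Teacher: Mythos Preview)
The paper does not actually prove Theorem~\ref{thm:hp2}; it quotes the result from Kilpel\"ainen--Koskela~\cite{KilKos94}, pointing specifically to their estimate~(3.4) and Lemma~2.4, and remarks that the argument there carries over to systems. Your proposal is precisely a sketch of that argument: an interior Caccioppoli estimate, a boundary Caccioppoli estimate obtained by testing with $\eta^{p_j}(w_j-u)$ extended by zero (so that the Sobolev--Poincar\'e inequality applies without a mean value on balls that exit $2Q_j$), and then Gehring's lemma applied to $\abs{Dw_j}^{p_j/s}\chi_{2Q_j}$ on all of~$\Rn$. This is correct and is the standard route; the only point to tighten is that for balls centered \emph{outside} $\overline{2Q_j}$ one still needs the reverse H\"older inequality, but there the complement $B_{2r}\setminus 2Q_j$ automatically has measure comparable to $\abs{B_{2r}}$ (cubes satisfy a uniform exterior measure density condition), so the same zero-trace Poincar\'e argument goes through. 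With that detail and the energy identity for $\mu=1$ you already have, the proof is complete and coincides with the approach of the cited reference.
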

 Let us point out, that the paper~\cite{KilKos94} is stated for equations only; however, all their arguments used for the estimate below are valid for systems as well.
For equations the last Theorem holds for all $1\leq\mu<\infty$ which was proven in~\cite[Theorem~5]{KinZho01}.

The use of this theorem requires higher integrability of
$Du$. To close this argument, we will assume in the following
that for every $Q_j$ with $Q_j\cap\Uel\neq\emptyset$ it holds
%
\begin{align}
  \label{eq:assQj}
  \sigma p^-_{2Q_j} \geq p^+_{2Q_j} \qquad \text{with }\sigma :=
  \sqrt[4]{\min\set{m_0,m_1}}>1.
\end{align}

In this situation Corollary~\ref{cor:gehring} implies $u \in W^{1,\sigma^4 \px}(2Q_j)
\embedding W^{1,\sigma^3 p_j}(2Q_j)$ and then Theorem~\ref{thm:hp2}
implies $w_j \in W^{1,\sigma^3 p_j}(2Q_j) \embedding
W^{1,\sigma^2\px}(2Q_j)$.

The above estimates of $u$ in $W^{1,\sigma^3 p_j}(2Q_j)$ and $w$ in
$W^{1,\sigma^2 \px}(2Q_j)$ depend unfortunately on the size of $Q_j$.
We derive in the following three lemmas more precise modular
estimates.
\begin{lemma}
  \label{lem:upj}
  If $Q_j \cap \Uel \not=\emptyset$ and $p$ holds \eqref{eq:assQj}, then
  \begin{align*}
    \bigg(\dint_{2Q_j}\abs{Du}^{\sigma^3p_j}
    \,dx\bigg)^\frac{1}{\sigma^3} & \leq c\,\lambda.
  \end{align*}
\end{lemma}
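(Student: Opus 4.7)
The starting point is the Gehring-type bound \eqref{eq:Qjpmlambda}, which says $\dashint_{2Q_j}|Du|^{p(\cdot)m_0}\,dx\leq (c\lambda)^{m_0}$. Essentially two things separate this from the claim: the moment has to be reduced from $m_0$ down to $\sigma^3$, and the variable exponent $p(\cdot)$ inside has to be replaced by the constant $p_j$. The first step is trivial; the second is the substance of the proof and is where the log-H{\"o}lder continuity of $p$ is invoked together with the dyadic coupling between $\ell(Q_j)$ and $\lambda$.

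For the moment reduction, since $\sigma^4\leq m_0$, the exponent $s:=\sigma^3/m_0$ lies in $(0,1]$, so Jensen's inequality applied to $t\mapsto t^s$ yields
\begin{align*}
 \dashint_{2Q_j}|Du|^{\sigma^3 p(x)}\,dx = \dashint_{2Q_j}\bigl(|Du|^{m_0 p(x)}\bigr)^{s}\,dx
 \leq \Bigl(\dashint_{2Q_j}|Du|^{m_0 p(x)}\,dx\Bigr)^{s} \leq (c\lambda)^{\sigma^3}.
\end{align*}
Assumption \eqref{eq:assQj} further gives $p_j\leq p^+_{2Q_j}\leq \sigma p^-_{2Q_j}\leq \sigma p(x)$ on $2Q_j$, so $\sigma^3 p_j\leq \sigma^4 p(x)\leq m_0 p(x)$, which is what allows us to compare the constant and variable exponent integrands.

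To replace $p(x)$ by $p_j$, I would write $|Du|^{\sigma^3 p_j} = |Du|^{\sigma^3 p(x)}\cdot |Du|^{\sigma^3(p_j-p(x))}$ (treating $\{|Du|\leq 1\}$ trivially, where the integrand is bounded by $1$). The log-H{\"o}lder estimate $|p_j-p(x)|\leq c_{\log}/\log(e+1/\ell(Q_j))$ will be combined with the observation that on the dyadic maximal cube $Q_j$ the inequality $\lambda|Q_j|\leq \lambda_0|2\Omega|$ (from \eqref{eq:Qjlambda} and the definition of $\lambda_0$) forces $\ell(Q_j)\lesssim \lambda^{-1/n}$, so $\log(e+1/\ell(Q_j))\gtrsim \log\lambda$ for large $\lambda$. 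Splitting $2Q_j$ at the threshold $T:=\ell(Q_j)^{-\gamma}$ for a well-chosen $\gamma>0$, on $\{|Du|\leq T\}$ the correction $T^{\sigma^3|p_j-p(x)|}$ is uniformly bounded by the log-H{\"o}lder estimate, yielding $|Du|^{\sigma^3 p_j}\leq C|Du|^{\sigma^3 p(x)}$ and therefore a bound by $C(c\lambda)^{\sigma^3}$ via the previous step. On the complementary set $\{|Du|>T\}$, the pointwise inequality $|Du|^{\sigma^3 p_j}\leq |Du|^{\sigma^4 p(x)}\leq |Du|^{m_0 p(x)}$ combined with Chebyshev's inequality (or a dyadic ring decomposition) converts the excess moment $m_0-\sigma^3>0$ into a factor $T^{-(m_0-\sigma^3)p^-_{2Q_j}}$, which absorbs the $(c\lambda)^{m_0}$ factor into a constant times $(c\lambda)^{\sigma^3}$ thanks again to the link $\ell(Q_j)\lesssim \lambda^{-1/n}$.

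The delicate point is the joint calibration of $\gamma$ (threshold scale) and $\omega:=c_{\log}/\log(e+1/\ell(Q_j))$ so that the bounded-correction regime and the tail regime both deliver $(c\lambda)^{\sigma^3}$; the slack needed for this comes precisely from the strict inequality $\sigma^3<\sigma^4\leq m_0$, which is built into the definition $\sigma=\sqrt[4]{\min\{m_0,m_1\}}$. The constant $c$ then depends on $\lambda_0$, $|\Omega|$, $c_{\log}(p)$, $p^\pm$, $n$ and $\sigma$, which is consistent with the dependencies declared elsewhere in this section.
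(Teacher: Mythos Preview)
Your approach is genuinely different from the paper's and, while it captures the right mechanism for \emph{small} cubes, it has a gap that matters for the way this lemma is used downstream.

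\textbf{What the paper does.} The paper does not split at a threshold $T$. It applies the key estimate, Lemma~\ref{lem:pjensen}, to the function $f=\abs{Du}^{\sigma^3 p_j}$ with the variable exponent $\sigma\,p(\cdot)/p_j$, evaluated at the distinguished point $y_j$ (where $p(y_j)=p_j$). After checking the hypothesis $\dashint_{2Q_j}\abs{f}\le K_1\max\{1,\abs{Q_j}^{-\beta}\}$ (this is the content of \eqref{eq:2}), the key estimate produces
\[
\bigg(\dashint_{2Q_j}\abs{Du}^{\sigma^3 p_j}\bigg)^{\sigma}
\le c\dashint_{2Q_j}\abs{Du}^{\sigma^4 p(\cdot)}
+c\dashint_{2Q_j}(e+\abs{x})^{-\sigma 2n}\,dx,
\]
and both terms on the right are $\le (c\lambda)^{\sigma^4}$ by \eqref{eq:Qjpmlambda} and \eqref{eq:QjGlambda}. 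The decay error term $(e+\abs{x})^{-m}$ and the special choice of $y_j$ are what make the constants independent of $\abs{\Omega}$.

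\textbf{Where your argument falls short.} Your threshold argument, as written, only invokes the \emph{local} log-H{\"o}lder bound $\abs{p_j-p(x)}\le c_{\log}/\log(e+1/\ell(Q_j))$, together with the relation $\ell(Q_j)\lesssim\lambda^{-1/n}$. But the latter is only an \emph{upper} bound on $\ell(Q_j)$; for $\lambda$ close to $\lambda_0$ the cube $Q_j$ may be large (comparable to $\Omega$), and then both your threshold $T=\ell(Q_j)^{-\gamma}$ and the local log-H{\"o}lder bound degenerate. Similarly, your treatment of $\{\abs{Du}\le 1\}$ as ``bounded by $1$'' forces the final constant to satisfy $c\ge \lambda_0^{-1}$, i.e.\ to depend on $\abs{\Omega}$. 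You list $\lambda_0$ and $\abs{\Omega}$ among the dependencies and call this ``consistent'', but it is not: the entire point of this section (and of Theorem~\ref{thm:local} as stated) is that the constants depend on $\norm{\abs{Du}^{p(\cdot)}}_{1,2\Omega}$ but \emph{not} on $\abs{\Omega}$, so that the global Corollaries~\ref{cor:global}--\ref{cor:globalv} follow by letting $\Omega\nearrow\setR^n$. The missing ingredient in your sketch is exactly the decay part of the log-H{\"o}lder condition, packaged in the paper as the error function $h(x)=(e+\abs{x})^{-m}$ inside Lemma~\ref{lem:pjensen} and then killed by the hypothesis $Q_j\cap\Uel\neq\emptyset$ via \eqref{eq:QjGlambda}.
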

\begin{proof}
  We want to apply the key estimate
  (Lemma~\ref{lem:pjensen}) to the function $\abs{Du}^{\sigma^3
    p_j}$. So let us verify the requirements. By
  Corollary~\ref{cor:gehring}, Remark~\ref{rem:scaling} and Young's
  inequality we deduce
  \begin{align}
    \label{eq:2}
    \begin{aligned}
      \lefteqn{\dint_{2Q_j}\abs{Du}^{\sigma^3 p_j} \,dx \leq
        \dint_{2Q_j}\abs{Du}^{m_0 \px}\,dx + 1} \qquad &
      \\
      &\leq c\bigg(\dint_{3Q_j}\abs{Du}^\px \,dx\bigg)^{m_0} +c
      \dint_{3Q_j}\abs{G}^{\px m_0}+h^{m_0}\,dx + 1
      \\
      &\leq c\bigg(\dint_{3Q_j}\abs{Du}^\px \,dx\bigg)^{m_0} +c
      \dint_{3Q_j}\abs{G}^{\px q}+h^{m_0}\,dx + c
      \\
      &\leq c\, \max \bigset{ \abs{Q_j}^{-m_0}, 1},
    \end{aligned}  
  \end{align}
  where the last constant  depends on
  $\norm{\abs{Du}^\px}_{L^1(3Q_j)}$ and $\norm{\abs{G}^\px}_{L^q(3Q_j)}$.

  This allows to apply Lemma~\ref{lem:pjensen} to
  $\abs{Du}^{\sigma^3\, p_j}$ with exponent $\frac{\sigma
    \px}{p_j}\geq 1$ at $x=y_j$ (recall $p(y_j)=p_j$ and
  $\sigma^4=\min\set{m_0,m_1}$) and use Remark~\ref{rem:dwK}.
  \begin{align*}
    \bigg( \dint_{2Q_j}\abs{Du}^{\sigma^3 p_j}
    \,dx\bigg)^{\frac{\sigma p(y_j)}{p_j}} &\leq c
    \dint_{2Q_j}\abs{Du}^{m_0\px} \,dx+c\dint_{2Q_j} (e+\abs{x})^{-m_0
      2n} \,dx.
  \end{align*}
  Due to~\eqref{eq:Qjpmlambda} we can estimate the right-hand side by
  $\lambda^{m_0}$.
\end{proof}
\begin{lemma}
  \label{lem:wpj}
  If $Q_j \cap \Uel \not=\emptyset$, then
  \begin{align*}
    \bigg(\dint_{2Q_j}\abs{Dw_j}^{\sigma^3p_j}
    \,dx\bigg)^\frac{1}{\sigma^3} & \leq c\,\lambda.
  \end{align*}
\end{lemma}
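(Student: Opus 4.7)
The plan is to combine Theorem~\ref{thm:hp2} (boundary higher integrability for the comparison problem) with Lemma~\ref{lem:upj} (the modular estimate for $Du$). Since $w_j$ solves \eqref{eq:lhp} with boundary datum $u$, any integrability of $Du$ transfers to $Dw_j$ via Theorem~\ref{thm:hp2}, provided the exponent lies in the admissible range.

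First I would check that the hypotheses of Theorem~\ref{thm:hp2} are met with $\mu = \sigma^3$. By the definition $\sigma = \sqrt[4]{\min\{m_0,m_1\}}$ from~\eqref{eq:assQj}, we have $\sigma > 1$ and $\sigma^3 < \sigma^4 = \min\{m_0,m_1\} \leq m_1$, so $\sigma^3 \in [1,m_1]$ is an admissible choice in Theorem~\ref{thm:hp2}. Moreover, Lemma~\ref{lem:upj} ensures $\abs{Du}^{\sigma^3 p_j} \in L^1(2Q_j)$, so $u \in W^{1,\sigma^3 p_j}(2Q_j)$ and the theorem is applicable.

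Next I would apply Theorem~\ref{thm:hp2} with $\mu = \sigma^3$ to obtain
\begin{align*}
  \dint_{2Q_j}\abs{Dw_j}^{\sigma^3 p_j} \,dx \leq c \dint_{2Q_j}\abs{Du}^{\sigma^3 p_j} \,dx,
\end{align*}
with a constant depending only on $p_j \leq p^+$. Taking the $\frac{1}{\sigma^3}$-power of both sides and using Lemma~\ref{lem:upj} on the right-hand side yields the claimed bound $\bigl(\dint_{2Q_j}\abs{Dw_j}^{\sigma^3 p_j}\,dx\bigr)^{1/\sigma^3} \leq c\lambda$.

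There is no real obstacle here, because the work has already been done in Theorem~\ref{thm:hp2} and Lemma~\ref{lem:upj}; the only subtle point is the bookkeeping of the exponent $\sigma^3$, which was precisely the reason for defining $\sigma$ as the \emph{fourth} root of $\min\{m_0,m_1\}$ in~\eqref{eq:assQj} — this leaves just enough room to absorb one gain factor when passing from $u$ to $Du$ via Corollary~\ref{cor:gehring} (used inside Lemma~\ref{lem:upj}) and another when transferring integrability from $u$ to $w_j$ via Theorem~\ref{thm:hp2}.
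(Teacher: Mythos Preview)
Your proposal is correct and follows exactly the paper's approach: the paper's proof is the single sentence ``The lemma is an immediate consequence of Theorem~\ref{thm:hp2} and Lemma~\ref{lem:upj},'' and you have simply spelled out the bookkeeping (checking $\sigma^3\le m_1$ and that $u\in W^{1,\sigma^3 p_j}(2Q_j)$) that makes this one-line argument work.
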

\begin{proof}
  The lemma is an immediate consequence of Theorem~\ref{thm:hp2}
  and~Lemma~\ref{lem:upj}.
\end{proof}
\begin{lemma}
  \label{lem:dwdu}
  If $Q_j \cap \Uel \not=\emptyset$, then
  \begin{align*}
    \bigg(\dint_{2Q_j}\abs{Dw_j}^{\sigma^2\px}
    \,dx\bigg)^\frac{1}{\sigma^2} \leq c\,\lambda.
  \end{align*}
\end{lemma}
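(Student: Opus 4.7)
The strategy is to promote the constant-exponent bound of Lemma~\ref{lem:wpj} to its variable-exponent analogue via H\"older's inequality, with the resulting small-power remainder controlled through the standard $\log$-H\"older trick combined with the cube-size bound $\lambda \lesssim \abs{\Omega}\lambda_0/\abs{Q_j}$ coming from the selection of $Q_j$. The two structural ingredients are (i) the oscillation control $\omega_j := \sup_{2Q_j}\abs{\px - p_j} \leq (\sigma - 1) p_j$ provided by assumption~\eqref{eq:assQj} via $\omega_j \leq p^+_{2Q_j} - p^-_{2Q_j} \leq (\sigma - 1) p^-_{2Q_j}$, and (ii) the estimate $\lambda \leq (\abs{\Omega}/\abs{Q_j})\lambda_0$ from \eqref{eq:Qjlambda} and \eqref{eq:lambda0}.

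First I would split the integrand pointwise on $2Q_j$ as
\[
\abs{Dw_j}^{\sigma^2 \px} \leq 1 + \abs{Dw_j}^{\sigma^2 p_j}\,\abs{Dw_j}^{\sigma^2\omega_j},
\]
which follows from the triviality $\abs{Dw_j}^{\sigma^2 \px} \leq 1$ on $\{\abs{Dw_j}<1\}$ and the pointwise inequality $\px \leq p_j + \omega_j$ on $\{\abs{Dw_j}\geq 1\}$. Applying H\"older's inequality to the product with conjugate exponents $\sigma$ and $\sigma/(\sigma-1)$ yields
\[
\dashint_{2Q_j}\abs{Dw_j}^{\sigma^2 \px}\,dx \leq 1 + \bigg(\dashint_{2Q_j}\abs{Dw_j}^{\sigma^3 p_j}\,dx\bigg)^{1/\sigma} \bigg(\dashint_{2Q_j}\abs{Dw_j}^{\sigma^3 \omega_j/(\sigma-1)}\,dx\bigg)^{(\sigma-1)/\sigma}.
\]
Lemma~\ref{lem:wpj} bounds the first bracket by $c\lambda^{\sigma^2}$.

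For the second bracket I would exploit that $\omega_j/((\sigma-1) p_j) \leq 1$ (by \eqref{eq:assQj}), so Jensen's inequality (concavity of $t \mapsto t^{\omega_j/((\sigma-1)p_j)}$) majorises it by $\bigl(\dashint\abs{Dw_j}^{\sigma^3 p_j}\bigr)^{\omega_j/((\sigma-1) p_j)} \leq (c\lambda^{\sigma^3})^{\omega_j/((\sigma-1)p_j)}$; after raising to $(\sigma-1)/\sigma$ the task reduces to controlling $\lambda^{\sigma^2 \omega_j / p_j}$ by a constant. This is exactly the $\log$-H\"older trick: from $\lambda \leq c\abs{\Omega}\lambda_0/\abs{Q_j}$ one gets $\log \lambda \leq c_1 + n\log(1/\ell(Q_j))$, while the $\log$-H\"older continuity of $p$ gives $\omega_j \leq c_{\log}(p)/\log(e + 1/\ell(Q_j))$; multiplying shows $\omega_j \log \lambda$, and hence the power of $\lambda$ we need, is bounded by a constant depending only on $n$, $p^-$, $c_{\log}(p)$, $\abs{\Omega}$ and $\lambda_0$.

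Combining everything yields $\dashint_{2Q_j}\abs{Dw_j}^{\sigma^2 \px}\,dx \leq c\lambda^{\sigma^2} + 1$, and the residual $1$ is absorbed into $c\lambda^{\sigma^2}$ using $\lambda \geq \lambda_0 > 0$; taking the $1/\sigma^2$-th root then gives the claim. The main technical obstacle is the second H\"older factor: the crude pointwise inequality $\sigma^2 \px \leq \sigma^3 p_j$ coming from \eqref{eq:assQj} alone would only yield $\dashint\abs{Dw_j}^{\sigma^2\px} \lesssim \lambda^{\sigma^3}$, off by a power $\lambda^{\sigma^3-\sigma^2}$; recovering the correct power $\lambda^{\sigma^2}$ requires the combination of Jensen with the $\log$-H\"older-plus-cube-size estimate on $\omega_j \log \lambda$.
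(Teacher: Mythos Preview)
Your argument is correct, but it takes a different route from the paper. The paper applies the $\px$-Jensen inequality (Lemma~\ref{lem:pjensen}, the ``key estimate'') directly to the function $\abs{Dw_j}^{\sigma^2\px}$ with exponent $\sigma p_j/\px$ evaluated at the distinguished point~$y_j$: after checking the growth hypothesis $\dashint_{2Q_j}\abs{Dw_j}^{\sigma^2\px}\,dx \leq c\max\{1,\abs{Q_j}^{-m_0}\}$ via Theorem~\ref{thm:hp2} and~\eqref{eq:2}, the key estimate yields
\[
\bigg(\dashint_{2Q_j}\abs{Dw_j}^{\sigma^2\px}\,dx\bigg)^{\sigma} \leq c\dashint_{2Q_j}\abs{Dw_j}^{\sigma^3 p_j}\,dx + c\dashint_{2Q_j}(e+\abs{x})^{-\sigma^3 2n}\,dx,
\]
after which Lemma~\ref{lem:wpj} and~\eqref{eq:QjGlambda} finish the job. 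Your approach instead bypasses Lemma~\ref{lem:pjensen} entirely: you split off $\{\abs{Dw_j}<1\}$, apply ordinary H\"older with exponents $(\sigma,\sigma')$, use classical (concave) Jensen on the oscillation factor, and then control $\lambda^{\omega_j}$ by the elementary $\log$-H\"older trick combined with $\lambda \lesssim \abs{\Omega}\lambda_0/\abs{Q_j}$. This is closer in spirit to the original Acerbi--Mingione freezing argument and is more self-contained; the price is that the decay part of the $\log$-H\"older condition never enters, so your constant carries an explicit dependence on $\abs{\Omega}$ and $\lambda_0$ (which in the paper's framework is hidden inside the $K_1$-dependence of the key estimate). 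One small point worth making explicit in your write-up: for cubes with $\ell(Q_j)\geq 1$ the bound $\omega_j \leq c_{\log}(p)/\log(e+1/\ell(Q_j))$ degenerates to $\omega_j\leq c$, but then $\log(1/\ell(Q_j))\leq 0$ forces $\log\lambda\leq c_1$, so $\omega_j\log\lambda$ is still bounded --- your ``multiplying shows'' sentence covers this, but a one-line case distinction would make it airtight.
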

\begin{proof}
  We want to apply the key estimate 
  (Lemma~\ref{lem:pjensen}) to the function $\abs{Dw_j}^{\sigma^2 \px}$.
  So let us verify the requirements. By Young's inequality,
  Theorem~\ref{thm:hp2} and~\eqref{eq:2} we have
  \begin{align*}
    \dint_{2Q_j}\abs{Dw_j}^{\sigma^2 \px} \,dx &\leq
    \dint_{2Q_j}\abs{Dw_j}^{\sigma^3 p_j}\,dx + 1
    \\
    &\leq c\, \dint_{2Q_j}\abs{Du}^{\sigma^3 p_j}\,dx + 1
    \\
    &\leq c\, \max \bigset{ \abs{Q_j}^{-m_0}, 1}.
  \end{align*}
  This allows to apply Lemma~\ref{lem:pjensen} to
  $\abs{Dw_j}^{\sigma^2\, \px}$ with exponent $\frac{\sigma
    p_j}{\px}\geq 1$ at $x=y_j$ (recall $p(y_j)=p_j$ and
  $\sigma^4=\min\set{m_0,m_1}$) and use Remark~\ref{rem:dwK}.
  \begin{align*}
    \bigg( \dint_{2Q_j}\abs{Dw_j}^{\sigma^2 \px}
    \,dx\bigg)^{\frac{\sigma p_j}{p(y_j)}} &\leq c
    \dint_{2Q_j}\abs{Dw_j}^{\sigma^3 p_j} \,dx+c\dint_{2Q_j}
    (e+\abs{x})^{-\sigma^3 2n} \,dx.
  \end{align*}
  The first term on the right hand side is estimated by
  Lemma~\ref{lem:wpj}. The second term is controlled
  by~\eqref{eq:QjGlambda}.
\end{proof}

\subsection{Comparison Estimate}
\label{ssub:compe}

We show in this subsection that our approximate solution~$w_j$ is
indeed close to our solution~$u$. Obviously, the (small) distance from $A$ to
$A_j$ is most important for our estimates. Due to~\eqref{eq:diffAlog} we have
\begin{align}
  \label{eq:AA_j}
  \abs{A(x,z)-A_j(z)}\leq c\abs{p(x)-p_j}\,\bigabs{\log \abs{z}}
  \big(\abs{z}^{p(x)-1}+\abs{z}^{p(y)-1}\big),
\end{align}
for all $x \in 2\Omega$ and $z \in \RNn$. In particular, the distance
of $A$ and $A_j$ is strongly connected with the distance from~$p$ to
$p_j$. We begin with some auxiliary estimates.

Due to the special choice of $y_j$, namely $\abs{y_j} = \sup_{x\in
  2Q_j} \abs{x}$, and $p(y_j)=p_j$ it follows from~\eqref{eq:plogH}
that 
\begin{align}
  \label{eq:pvspj}
  \abs{p(x) - p_j} &\leq \abs{p(x) - p_\infty} + \abs{p(y_j)-
    p_\infty} \leq \frac{2\,(p^+)^2 c_{\log}(p)}{\log(e + \abs{x})}
\end{align}
for all $x \in 2Q_j$.
\begin{lemma}
  \label{lem:ppjavg}
  Let $Q \subset \Rn$ be a cube (or ball) with side length~$R$ and let $p
  \in \PPln(Q)$. Then for every $s \geq 1$ there exists a constant~$c$
  depending only on $s$ such that
  \begin{align*}
    \bigg(\dashint_{Q} \abs{\px - p_j}^s \,dx\bigg)^\frac{1}{s} \leq
    \frac{c\, (p^+)^2 c_{\log}(p)}{\log(e + \max \set{R, 1/R,\abs{{\rm
          center}(Q)}})}.
  \end{align*}

\end{lemma}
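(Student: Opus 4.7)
The plan is to combine the two manifestations of $\log$-H{\"o}lder continuity from \eqref{eq:plogH}---the local oscillation bound in $|x-y|$ and the decay bound in $|x|$---choosing whichever is sharper according to the size of the cube relative to its distance from the origin. Throughout I write $y := y_j$ for the point of $\overline{Q}$ maximising $|\cdot|$, so that $p_j = p(y)$. A direct inspection of the corners of $Q$ yields the purely geometric bounds $|y| \geq R\sqrt{n}/2$ and $|y| \geq |\cent(Q)|$.

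Set $M := \max\{R, 1/R, |\cent(Q)|\}$ and split according to which quantity realises $M$. In the first case $M = 1/R$ (so $R \leq 1$) and $|x-y| \leq \sqrt{n}\,R$ for every $x \in Q$, so the local part of \eqref{eq:plogH} gives the pointwise estimate
\begin{align*}
  |p(x)-p_j| \leq \frac{(p^+)^2 c_{\log}(p)}{\log(e + 1/(\sqrt{n}\,R))} \leq \frac{c\,(p^+)^2 c_{\log}(p)}{\log(e+M)},
\end{align*}
uniformly on $Q$, and the $L^s$-average is immediate. In the remaining cases $M \in \{R,|\cent(Q)|\}$, which forces $M \geq 1$, and the geometric remark yields $|y| \geq M/2$. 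Splitting through $p_\infty$ and applying the decay part of \eqref{eq:plogH},
\begin{align*}
  |p(x)-p_j| \leq |p(x)-p_\infty| + |p_j-p_\infty| \leq \frac{c\,(p^+)^2 c_{\log}(p)}{\log(e+|x|)} + \frac{c\,(p^+)^2 c_{\log}(p)}{\log(e+M)}.
\end{align*}
The second summand is already of the desired form, so it suffices to establish the auxiliary inequality
\begin{align*}
  \dashint_Q \frac{dx}{\log(e+|x|)^s} \leq \frac{c}{\log(e+M)^s}.
\end{align*}

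This I would handle via a dyadic annular decomposition of $Q$. Since $|\cent(Q)| \leq M$ and $R \leq M$, the inclusion $Q \subset B(0,cM)$ holds, so with $A_k := \{2^k \leq |x| < 2^{k+1}\}$ for $k \geq 0$ and $A_{-1} := B(0,1)$, only indices $k \leq K \sim \log_2 M$ contribute. The two-sided bound $|Q \cap A_k| \leq \min(c\,2^{kn}, R^n)$ gives
\begin{align*}
  \int_Q \frac{dx}{\log(e+|x|)^s} \leq \sum_{k=-1}^{K} \frac{\min(c\,2^{kn},R^n)}{\log(e+2^k)^s}.
\end{align*}
The summands grow geometrically with ratio tending to $2^n > 1$ before saturating at $R^n$, so the total is governed by its largest nonnegligible term, attained near $k \sim \log_2 R$ and of order $R^n/\log(e+R)^s$. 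In these subcases $M$ and $R$ are comparable---either $M = R$ directly, or $M = |\cent(Q)| \geq R$, where one further distinguishes $|\cent(Q)| \geq R\sqrt{n}$ (so $|x| \geq M/2$ pointwise, giving the bound without any summation) and $|\cent(Q)| < R\sqrt{n}$ (so $M \sim R$ and the dyadic computation applies). Dividing by $|Q|=R^n$ and taking $s$-th roots completes the proof.

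The delicate point is the dyadic estimate when $Q$ straddles the origin, that is $|\cent(Q)| \leq R$ with $R$ large. Then $1/\log(e+|x|)^s$ is of order one on $Q \cap B(0,1)$, a contribution whose measure is independent of $R$; it is precisely the geometric growth $2^{kn}$ of the outer annuli together with the $\log$-factor in their denominator that absorbs this near-origin mass into the final $R^n/\log(e+R)^s$. The constants come out depending on $s$ and $n$, while the factor $(p^+)^2 c_{\log}(p)$ is inherited directly from \eqref{eq:plogH}.
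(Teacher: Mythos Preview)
Your argument is correct and tracks the paper's proof closely: the same case split on which of $R$, $1/R$, $|\cent(Q)|$ realises the maximum, the pointwise local bound for small cubes, the reduction via $p_\infty$ to the auxiliary estimate on the average of $\log(e+|x|)^{-s}$ for the remaining cases, and the further sub-split according to whether $Q$ stays far from the origin (pointwise bound) or straddles it. The only substantive difference is in this last sub-case. The paper observes that, since $\log(e+|\cdot|)^{-s}$ is radially decreasing, the average over $Q$ is dominated by the average over the cube of the same size centred at the origin; it then handles the resulting radial integral by splitting $\int_0^R r^{n-1}\log(e+r)^{-s}\,dr$ at $r=R^{1/2}$: on $[0,R^{1/2}]$ the logarithm is simply dropped, yielding a negative power of $R$, while on $[R^{1/2},R]$ one uses $\log(e+r)\geq c\log(e+R)$. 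Your dyadic annular sum achieves the same end, with the geometric growth $2^{kn}$ of the outer annuli absorbing the bounded near-origin contribution just as the paper's $R^{1/2}$ split does. The rearrangement-plus-split is a touch slicker and gives the constants more transparently; your version avoids the rearrangement step and is arguably more portable.
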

\begin{proof}
  If $\max \set{R, 1/R,\abs{\rm center(Q)}}=1/R$, then $R\leq 1$ and by
  the local estimate of~\eqref{eq:plogH}
  \begin{align*}
    \bigg(\dashint_{Q} \abs{\px - p_j}^s \,dx\bigg)^\frac{1}{s} &\leq
    p_Q^+ - p_Q^- \leq \frac{2(p^+)^2}{\log(e + 1/(\sqrt{n}R))} \leq
    \frac{c\, (p^+)^2 c_{\log}(p)}{\log(e + 1/R)},
  \end{align*}
  where the constant depends on $p^+$.

  In the following let $R \geq 1$. Then by~\eqref{eq:pvspj}
  \begin{align}
    \label{eq:4}
    \bigg(\dashint_{Q} \abs{\px - p_j}^s \,dx\bigg)^\frac{1}{s} &\leq
    (p^+)^2 c_{\log}(p) \bigg(\dashint_{Q} \frac{c}{(\log(e+\abs{x}))^s}
    \,dx\bigg)^\frac{1}{s}.
  \end{align}
  If $\sqrt{n}R \leq \frac{1}{2} \abs{{\rm center}(Q)}$, then $\max
  \set{R, 1/R,\abs{\rm center(Q)}}= \abs{{\rm center}(Q)}$ and
  $\abs{x} \geq \frac{1}{2} \abs{{\rm center}(Q)}$ for all $x \in
  Q$. Hence, 
  \begin{align*}
     \bigg(\dashint_{Q} \frac{c}{(\log(e+\abs{x}))^s}
     \,dx\bigg)^\frac{1}{s} \leq
     \frac{c}{\log(e+\frac 12 \abs{{\rm center}(Q)})} \leq 
     \frac{c}{\log(e+\abs{{\rm center}(Q)})}.
  \end{align*}
  It remains to consider the case $\sqrt{n}R \geq \frac{1}{2}
  \abs{{\rm center}(Q)}$ and $R\geq 1$.  In this situation we have
  $\max \set{R, 1/R,\abs{\rm center(Q)}} \leq 2 \sqrt{n} R$.  Let
  $Q_R(0)$ denote the cube of same size as~$Q$ but centered at zero.
  Then with $R \geq 1$
  \begin{align*}
    \lefteqn{ \bigg(\dashint_{Q} \frac{c}{(\log(e+\abs{x}))^s}
      \,dx\bigg)^\frac{1}{s} \leq \bigg(\dashint_{Q_R(0)}
      \frac{c}{(\log(e+\abs{x}))^s} \,dx\bigg)^\frac{1}{s}} \qquad &
    \\
    &= c\, \bigg( R^{-n} \int_0^R \frac{r^{n-1}}{(\log(e+r))^s} \,dr
    \bigg)^{\frac 1s},
    \\
    &\leq c\, \bigg( R^{-n} \int_0^{R^{\frac 12}}
    \frac{r^{n-1}}{(\log(e+r))^s} \,dr + R^{-n} \int_{R^{\frac 12}}^R
    \frac{r^{n-1}}{(\log(e+r))^s} \,dr \bigg)^{\frac 1s}
    \\
    &\leq c\, \bigg( R^{-n+\frac 12}\, R^{n-1} + R^{-n+1}
    \frac{c\,R^{n-1}}{(\log(e+R))^s} \bigg)^{\frac 1s}
    \\
    &\leq cR^{-\frac{1}{2}}+\frac{c}{(\log(e+R))}\leq \frac{c}{(\log(e+R))},
  \end{align*}
  where we used that $\log(e+r) \geq c\, \log (e+R)$ for $r \in
  [R^{\frac 12},R]$. This and~\eqref{eq:4} prove the remaining case.
\end{proof}
We need another technical lemma that takes care of the logarithmic
factor in~\eqref{eq:AA_j}.
\begin{lemma}
  \label{lem:lnfmean}
  Let $Q \subset \Rn$ be a cube and $s \geq 1$. Then there exists a
  constant $c$ depending only on~$s$ such that every $f \in
  L^1(Q)$ satisfies
  \begin{align*}
    \dashint_{Q} \log \bigg(e+ \frac{\abs{f}}{\dashint_Q \abs{f}\,dx} 
    \bigg)^s \,dx &\leq c.
  \end{align*}
\end{lemma}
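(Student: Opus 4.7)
The plan is to rescale to unit average, apply the layer-cake formula to the increasing function $\phi(t):=\log(e+t)^s$, and then use Chebyshev's inequality plus a clean convergence estimate in one variable. This is essentially a weak-type $L \log L$ bound and the computation is standard once one sets it up properly, so no step is really hard; the main thing is to make sure the constant ends up depending only on $s$.

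\textbf{Reduction.} Put $a:=\dashint_{Q}|f|\,dx$. If $a=0$, then $f=0$ a.e.\ on $Q$ and the integrand equals $\log(e)^s=1$, so the estimate is trivial. Otherwise, replacing $f$ by $f/a$ is harmless since both sides of the inequality are invariant under this scaling; therefore I may assume $\dashint_Q|f|\,dx=1$ and reduce to proving $\dashint_Q \phi(|f|)\,dx \leq c(s)$ with $\phi(t)=\log(e+t)^s$.

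\textbf{Layer-cake plus Chebyshev.} Writing $\phi(|f(x)|)=\phi(0)+\int_0^{|f(x)|}\phi'(\lambda)\,d\lambda$ and applying Fubini, I obtain
\[
    \dashint_{Q}\phi(|f|)\,dx \;=\; 1 \;+\; \int_{0}^{\infty}\phi'(\lambda)\,\frac{\bigabs{\{x\in Q:\abs{f(x)}>\lambda\}}}{\abs{Q}}\,d\lambda,
\]
where $\phi'(\lambda)=s\log(e+\lambda)^{s-1}/(e+\lambda)\ge 0$. Chebyshev's inequality combined with the normalization $\dashint_{Q}|f|\,dx=1$ yields the distributional bound $\abs{\{|f|>\lambda\}}/\abs{Q}\leq\min\{1,1/\lambda\}$.

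\textbf{Closing the computation.} Splitting the integral at $\lambda=1$ I get two pieces. The first is $\int_0^1\phi'(\lambda)\,d\lambda=\phi(1)-\phi(0)=\log(e+1)^s-1\leq c(s)$. The second is
\[
    \int_1^\infty\frac{\phi'(\lambda)}{\lambda}\,d\lambda \;=\; \int_1^\infty\frac{s\log(e+\lambda)^{s-1}}{\lambda\,(e+\lambda)}\,d\lambda \;\leq\; s\int_1^\infty\frac{\log(e+\lambda)^{s-1}}{\lambda^2}\,d\lambda,
\]
which converges to a finite constant depending only on $s$, since $\log(e+\lambda)^{s-1}$ grows slower than any power of $\lambda$. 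Adding the two contributions and the initial $1$ produces the desired bound $c=c(s)$. The only even mildly delicate point is verifying that the scaling reduction is licit and tracking that every constant is absolute in $|Q|$, $f$, and the location of $Q$; beyond that the proof is a one-variable exercise.
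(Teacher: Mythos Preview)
Your proof is correct and follows essentially the same route as the paper's: normalize to $\dashint_Q|f|\,dx=1$, apply the layer-cake formula to $\phi(t)=(\log(e+t))^s$, use the weak-$L^1$ bound $\abs{\{|f|>\lambda\}}\leq \lambda^{-1}\int_Q|f|\,dx$, and split the $\lambda$-integral at $1$. The only cosmetic difference is that you keep track of the constant $\phi(0)=1$ explicitly, whereas the paper absorbs it directly into the final constant.
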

\begin{proof}
  It suffices to proof the estimate for $f$ with
  $\dashint_Q \abs{f}\,dx = 1$.
  We estimate 
  \begin{align*}
    \dashint_Q (\log(e+\abs{f}))^s \,dx &= \frac{1}{\abs{Q}} \int_Q
    \int_0^{\abs{f}} s(\log(e+t))^{s-1} \frac{1}{e+t} \,dt \,dx
    \\
    &=\frac{1}{\abs{Q}} \int_0^\infty s(\log (e+t))^{s-1} \frac{1}{e+t}
    \abs{ Q \cap \set{\abs{f}> t}} \,dt. \qquad \qquad
  \end{align*}
  We split the domain of integration into $(0,1)$ and $(1,\infty)$ and
  use the estimate $t\,\abs{Q\cap\set{\abs{f}> t}} \leq \int_Q
  \abs{f}\,dx$ to get
  \begin{align*}
    \dashint_Q (\log(e+\abs{f})^s \,dx &\leq \int_0^1 s
    (\log(e+t))^{s-1} \frac{1}{e+t} \,dt
    \\
    &\quad+ \frac{1}{\abs{Q}} \int_1^\infty s(\log(e+t))^{s-1}
    \frac{1}{(e+t)t} \int_Q \abs{f}\,dx \,dt
    \\
    &\leq c.\qedhere
  \end{align*}
\end{proof}

Let us now turn to the closeness of $Dw_j$ and~$Du$.
\begin{proposition}
  \label{pro:comparison}
  For every $\kappa>0$ and $\delta>0$ there exists a $\delta_1>0$ and
  $\epsilon>0$, such that the following holds for every $\lambda>\lambda_0$ (defined in~\eqref{eq:lambda0}).

 If  $Q_j\cap \Uel
  \not=\emptyset$ and $c_{\log}(p|_{2Q_j})\leq\delta_1$, then 
  \begin{align*}
    \dint_{2Q_j}\big(A(\cdot,Du)-A(\cdot,Dw_j))\cdot\big(Du-Dw_j\big)
    \,dx\leq \delta \lambda.
  \end{align*}
  Here $\delta_1,\epsilon$ depends on $\kappa$, $\delta$, $p^+$ and
  $\norm{\abs{Du}^\px}_1$.
\end{proposition}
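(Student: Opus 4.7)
The plan is to test the weak formulations of \eqref{eq:difeq} and \eqref{eq:lhp} with the common admissible function $\phi := u - w_j$. Lemmas~\ref{lem:upj}, \ref{lem:dwdu} and Theorem~\ref{thm:hp2} guarantee $u, w_j \in W^{1,\px}(2Q_j) \cap W^{1,p_j}(2Q_j)$, so $\phi \in W^{1,\px}_0(2Q_j) \cap W^{1,p_j}_0(2Q_j)$ is admissible in both equations. Subtracting the two weak formulations and inserting $\pm A(\cdot, Dw_j)$ produces
\begin{align*}
  \int_{2Q_j}(A(\cdot,Du)-A(\cdot,Dw_j))\cdot(Du-Dw_j)\,dx = (I) + (II),
\end{align*}
where $(I) := \int_{2Q_j}\abs{G}^{\px-2}G\cdot(Du-Dw_j)\,dx$ records the forcing contribution and $(II) := \int_{2Q_j}(A_j(Dw_j)-A(\cdot,Dw_j))\cdot(Du-Dw_j)\,dx$ records the cost of freezing the exponent.

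For $(I)$ I would apply Young's inequality with variable exponent to obtain $\abs{(I)} \leq c_\theta \int_{2Q_j}\abs{G}^{\px}\,dx + \theta \int_{2Q_j}(\abs{Du}^{\px}+\abs{Dw_j}^{\px})\,dx$. Dividing by $\abs{2Q_j}$, the first average is bounded by $c_\theta \epsilon \lambda$ via \eqref{eq:QjGlambda} and the second by $c\theta\lambda$ via \eqref{eq:Qjlambda} and Lemma~\ref{lem:dwdu}; choosing $\theta$ and then $\epsilon$ small makes the $(I)$ contribution to the averaged left-hand side at most $\delta\lambda/2$.

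For $(II)$ I would use the pointwise bound \eqref{eq:AA_j} to estimate the integrand by $c\abs{\px-p_j}\bigabs{\log\abs{Dw_j}}(\abs{Dw_j}^{\px-1}+\abs{Dw_j}^{p_j-1})\abs{Du-Dw_j}$. A further Young inequality peels off $\abs{Du-Dw_j}$, producing a term of the form $\theta\int\abs{Du-Dw_j}^{\px}\,dx$ (to be absorbed, see below) and a remainder that I would control by a triple H\"older split: the factor $\abs{\px-p_j}$ is placed in a large $L^{r_1}$ whose average is small by Lemma~\ref{lem:ppjavg} and the assumption $c_{\log}(p|_{2Q_j})\leq\delta_1$; the factor $\bigabs{\log\abs{Dw_j}}$, after centring by $\dashint_{2Q_j}\abs{Dw_j}$, is uniformly bounded in $L^{r_2}$ by Lemma~\ref{lem:lnfmean}; the remaining power of $\abs{Dw_j}$ is placed in the super-$\px$ space provided by Lemmas~\ref{lem:wpj}--\ref{lem:dwdu}. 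The strictly super-$\px$ integrability of $Dw_j$ is the crucial slack here: it leaves room for the other two exponents while preserving the smallness of $\abs{\px - p_j}$.

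The absorbable terms $\theta\int\abs{Du-Dw_j}^{\px}\,dx$ are handled through \eqref{eq:diffAV} applied with $z=Du$, $\xi=Dw_j$: combined with $\abs{Du-Dw_j}^{\px}\leq c(\abs{Du}^{\px}+\abs{Dw_j}^{\px})$ this yields
\begin{align*}
\int_{2Q_j}\abs{Du-Dw_j}^{\px}\,dx \leq c\int_{2Q_j}\abs{Dw_j}^{\px}\,dx + c\,c_4\int_{2Q_j}(A(\cdot,Du)-A(\cdot,Dw_j))\cdot(Du-Dw_j)\,dx,
\end{align*}
so for $\theta$ chosen small enough relative to $c_4$ the last term moves to the left-hand side, while the first is controlled by $c\lambda$ via Lemma~\ref{lem:dwdu}. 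The order of parameter choices is therefore: first $\theta$ for absorption, then $\delta_1$ so that the Lemma~\ref{lem:ppjavg} factor in $(II)$ yields a contribution below $\delta\lambda/2$, and finally $\epsilon$ so that $(I)$ is also below $\delta\lambda/2$. The main obstacle I expect is the triple H\"older split in $(II)$: one must reconcile the possible gap between $\px$ and $p_j$ using simultaneously the log-H\"older modulus and the super-$\px$ integrability of $Dw_j$, without sacrificing the smallness that Lemma~\ref{lem:ppjavg} provides.
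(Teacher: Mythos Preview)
Your overall architecture—testing both equations with $u-w_j$, splitting into the forcing term and the frozen-exponent term, and handling the forcing with Young's inequality together with \eqref{eq:QjGlambda}, \eqref{eq:Qjlambda} and Lemma~\ref{lem:dwdu}—is exactly the paper's. The absorption device via \eqref{eq:diffAV} is an unnecessary detour (the paper simply bounds $\abs{Du}^{\px}+\abs{Du}^{p_j}+\abs{Dw_j}^{\px}+\abs{Dw_j}^{p_j}$ directly by $c\lambda$ using Lemmas~\ref{lem:upj}--\ref{lem:dwdu}), but it is not wrong.

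The genuine gap is your treatment of the factor $\bigabs{\log\abs{Dw_j}}$ in $(II)$. Lemma~\ref{lem:lnfmean} only controls $\log\big(e+\abs{f}/\mean{\abs{f}}\big)$; after your ``centring'' you are still left with a term of size $\log(e+\mean{\abs{Dw_j}})\sim\log(e+\lambda)$, which is \emph{not} uniformly bounded in~$Q_j$. Worse, on the set where $\abs{Dw_j}$ is close to zero the quantity $\bigabs{\log\abs{Dw_j}}$ blows up, and Lemma~\ref{lem:lnfmean} says nothing about this regime. So the claim that the log factor sits uniformly in $L^{r_2}$ is false, and the triple H\"older split does not close as stated.

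The paper repairs both issues by decomposing $2Q_j$ into three layers $H_1=\{\abs{Dw_j}\geq 1\}$, $H_2=\{h\leq\abs{Dw_j}<1\}$, $H_3=\{\abs{Dw_j}<h\}$. On $H_3$ one discards the $\abs{\px-p_j}$ factor entirely and uses $\abs{Dw_j}^{\px}+\abs{Dw_j}^{p_j}\leq c\,h$, absorbed into $\epsilon\lambda$ via \eqref{eq:QjGlambda}. On $H_2$ one has $\bigabs{\log\abs{Dw_j}}\leq m\log(e+\abs{x})$. On $H_1$ Lemma~\ref{lem:lnfmean} plus the shift $\log(e+t)\leq\log(e+t/\lambda)+\log(e+\lambda)$ gives a bound $c+c\log(e+\lambda)$, and then $\lambda\leq c/\abs{Q_j}$ turns this into $c\log(e+1/\ell(Q_j))$. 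The key point you are missing is that the resulting log-bounds on $H_1\cup H_2$ are of size $\log\big(e+\max\{1/\ell(Q_j),\ell(Q_j),\abs{\cent(Q_j)}\}\big)$, and this is \emph{exactly} cancelled by the denominator in Lemma~\ref{lem:ppjavg}, leaving only the factor $c\,c_{\log}(p|_{2Q_j})\leq c\,\delta_1$. Without this layered splitting and the explicit logarithmic cancellation, the estimate on $(II)$ cannot be made small.
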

\begin{proof}
  Since $p^+<\infty$, we can choose $\delta_1$ (depending on $p^+$) so
  small such that $c_{\log}(p|_{2Q_j})\leq\delta_1$ implies~\eqref{eq:assQj}:
  $\sigma p^-_{2Q_j} \leq p^+_{Q_j}$.

  By $u-w\in W^{1,\px}_0(2Q_j)\cap W^{1,p_Q}_0(2Q_j)$ it follows
  from the equations for $u$ and $w_j$ that
  \begin{align*}
    (I) &:= \dint_{2Q_j}\big(A(\cdot,Du)-A(\cdot,Dw_j)\big)\cdot
    \big(Du-Dw_j\big) \,dx
    \\
    &=\dint_{2Q_j}\big(A_j(Dw_j)-A(\cdot,Dw_j)\big)\cdot
    \big(Du-Dw_j\big) \,dx+ \dint_{2Q_j}
    A(\cdot,G)\cdot\big(Du-Dw_j\big) \,dx
    \\
    &=: (II) + (III).
  \end{align*}
  By Young's inequality with $\gamma>0$ we have that
  \begin{align*}
    (III) &\leq c\, \gamma^{1-(p^-)'}\dint_{2Q_j}\abs{G}^{\px}
    \,dx+\gamma\dint_{2Q_j}\abs{Du}^\px+\abs{Dw_j}^\px \,dx.
  \end{align*}
  With~\eqref{eq:QjGlambda}, \eqref{eq:Qjlambda} and
  Lemma~\ref{lem:dwdu} it follows that
  \begin{align*}
    (III) &\leq \epsilon\, c\, \gamma^{1-(p^-)'}\lambda+\gamma c\,
    \lambda = \big(\epsilon\, c\, \gamma^{1-(p^-)'}+c\,\gamma\big)
    \lambda.
  \end{align*}
 The factor in front
  of~$\lambda$ is small if $\gamma$ is small and (then) $\epsilon$ is
  small. 

  It remains to estimate $(II)$. We divide the domain of integration
  in~$(II)$ into the sets
  \begin{align*}
    H_1 &:=\{ x\in {2Q_j}:\abs{Dw_j(x)}\geq1\}	,	
    \\
    H_2 &:=\{ x\in {2Q_j}:1\geq\abs{Dw_j(x)}\geq h(x)\},		
    \\
    H_3 &:=\{ x\in {2Q_j}:h(x)\geq\abs{Dw_j(x)}\}.
  \end{align*}
  We define
  \begin{align*}
    (II_k) &:= \dashint_{2Q_j} \chi_{H_k}
    \bigabs{A_j(Dw_j)-A(\cdot,Dw_j)} \, \abs{Du-Dw_j} \,dx \qquad
    \text{for $k=1,2,3$},
  \end{align*}
  then $(II) \leq (II_1) + (II_2) + (II_3)$. 

  We begin the easiest term~$(II_3)$.  By Young's inequality with
  $\gamma>0$ we estimate pointwise on $H_3$
  \begin{align*}
    \lefteqn{\bigabs{A_j(Dw_j)-A(\cdot,Dw_j)}\, \abs{Du-Dw_j}} \qquad
    &
    \\
    &\leq c\, \big( \abs{Dw_j}^{p_j-1} + \abs{Dw_j}^{\px-1}\big) (
    \abs{Du} + \abs{Dw_j})
    \\
    &\leq c\, \gamma^{1-(p^-)'} \big(\abs{Dw_j}^\px+\abs{Dw_j}^{p_j}
    \big)+ \gamma \big(\abs{Du}^{p_j}+\abs{Du}^\px\big)
    \\
    &\leq c\, \gamma^{1-(p^-)'} h(\cdot)+\gamma
    \big(\abs{Du}^{p_j}+\abs{Du}^\px\big),
  \end{align*}  
  as $h(x)\geq h(x)^\alpha$ for any $\alpha\geq 1$. This,
  \eqref{eq:QjGlambda}, Lemma~\ref{lem:upj} and~\eqref{eq:Qjlambda}
  imply 
  \begin{align*}
    (II_3) &\leq c\, \gamma^{1-(p^-)'} \dint_{2Q_j}h
    \,dx+\gamma\dint_{2Q_j}\abs{Du}^{p_j}+\abs{Du}^\px \,dx
    \\
    &\leq \big(c\, \epsilon\gamma^{1-(p^-)'} + \gamma\big) \lambda.
  \end{align*}
  Again the factor in front of~$\lambda$ is small if $\gamma$ is small
  and (then) $\epsilon$ is small.

  For the remaining terms $(II_1)$ and $(II_2)$ we have to use the
  closeness of $A_j$ to $A$ (more preciselly the smallness of $c_{\log}(p|_{2Q_j})$). In particular, by~\eqref{eq:diffAlog} and
  Young's inequality we have pointwise on~$2Q_j$
  \begin{align*}
    \begin{aligned}
      \lefteqn{\abs{(A_j(Dw_j)-A(\cdot,Dw_j)\big)\cdot\big(Du-Dw_j\big)}}
      &
      \\
      &\leq c\,\abs{\px-{p_j}} \bigabs{\log\abs{Dw_j}} \big(
      (\abs{Dw_j}^{{p_j}-1}+\abs{Dw_j}^{\px-1}\big)
      (\abs{Du}+\abs{Dw_j})
      \\
      &\leq c\,\abs{\px-{p_j}} \bigabs{\log\abs{Dw_j}} \big(
      \abs{Dw_j}^\px + \abs{Dw_j}^{p_j} + \abs{Du}^\px +
      \abs{Du}^{p_j}\big).
    \end{aligned}  
  \end{align*}

  This implies for $k=1,2$
  \begin{align*}
    (II_k) &\leq \!\dashint_{2Q_j} \!\! \chi_{H_k} \abs{p(\cdot)-p_j}
    \abs{\log \abs{Dw_j}}\big( \abs{Dw_j}^\px \!+\!
    \abs{Dw_j}^{p_j} \!+\!  \abs{Du}^\px \!+\! \abs{Du}^{p_j}\big) dx.
  \end{align*}
  After applying H{\"o}lder's estimate for the exponents
  $(2\sigma',2\sigma',\sigma)$ and using Lemma~\ref{lem:dwdu},
  Lemma~\ref{lem:wpj}, \eqref{eq:Qjlambda} and Lemma~\ref{lem:upj} we
  get for $k=1,2$
  \begin{align*}
    (II_k) &\leq \bigg( \dashint_{2Q_j} \abs{p(\cdot)-p_j}^{2\sigma'}
    \bigg)^{\frac 1{2\sigma'}} \bigg( \dashint_{2Q_j} \chi_{H_k}
    \abs{\log \abs{Dw_j}}^{2\sigma'}\,dx \bigg)^{\frac
      1{2\sigma'}} \lambda
    \\
    &=: (IV)\, (V_k)\, \lambda.
  \end{align*}
  Due to Lemma~\ref{lem:ppjavg} we have
  \begin{align*}
    (IV) &\leq \frac{c\, (p^+)^2 c_{\log}(p)}{\log (e+\max
      \set{1/\ell(Q_j), \ell(Q_j), \abs{\cent(Q_j)}})}.
  \end{align*}
  Since $\abs{Dw_j} \geq 1$ on $H_1$, we have
  \begin{align*}
    (V_1) &\leq \bigg( \dashint_{2Q_j} \chi_{H_1} \big(\log
    (\abs{Dw_j}^{p_j})\big)^{2\sigma'}\,dx \bigg)^{\frac 1{2\sigma'}}
    \\
    &\leq \bigg( \dashint_{2Q_j} \chi_{H_1} \big(\log
    (e+\abs{Dw_j}^{p_j})\big)^{2\sigma'}\,dx \bigg)^{\frac
      1{2\sigma'}}.
  \end{align*}
  Using the estimate $\log(e+t) \leq \log(e+t/\lambda) + \log(e+\lambda)$
  we get
  \begin{align*}
    (V_1) &\leq \bigg( \dashint_{2Q_j} \chi_{H_1} \big(\log
    (e+\frac{\abs{Dw_j}^{p_j}}{\lambda})\big)^{2\sigma'}\,dx
    \bigg)^{\frac 1{2\sigma'}} + c\,\log(e+\lambda).
  \end{align*}
  Due to Lemma~\ref{lem:wpj} we have
  \begin{align*}
    \dashint_{2Q_j} \abs{Dw_j}^\px \,dx &\leq c\, \lambda.
  \end{align*}
Lemma~\ref{lem:lnfmean} and the previous estimate imply
  \begin{align*}
    (V_1) &\leq c + c\,\log(e+\lambda) \leq c\, \log(e+\lambda).
  \end{align*}  
  From $\norm{\abs{Du}^\px}_1 \leq c$, we know that $\int_{2\Omega}
  \abs{Du}^\px \leq c$. This and~\eqref{eq:Qjlambda} imply
  \begin{align*}
    \lambda \leq \frac{c}{\abs{Q_j}}.
  \end{align*}
 Therefore we gain
  \begin{align*}
    (V_1) &\leq c\, \log(e + 1/\ell(Q_j)).
  \end{align*}
  Since $(e+ \abs{x})^{-m} = h(x) \leq \abs{Dw_j(x)} \leq 1$ on $H_2$,
  we have
  \begin{align*}
    (V_2) &\leq \bigg( \dashint_{2Q_j} \big(\log
    (e+\abs{x})\big)^{2\sigma'}\,dx \bigg)^{\frac 1{2\sigma'}}
    \\
    &\leq c\,\log
    (e+\max \set{\ell(Q_j), \abs{\cent(Q_j)}}). 
  \end{align*}
  Overall, we have
  \begin{align*}
    (V_1) + (V_2) &\leq c\,\log (e+\max \set{1/\ell(Q_j), \ell(Q_j),
      \abs{\cent(Q_j)}}).
  \end{align*}
  The estimates for $(IV)$, $(V_1)$ and $(V_2)$ imply
  \begin{align*}
    (II_1) + (II_2) &\leq c\,(p^+)^2\,c_{\log}(p)\,\lambda.
  \end{align*}
  The factor in front of~$\lambda$ is small if $\delta_1$ is small
  (for fixed upper bound of $p^+$).

  Combining the estimates for $(III)$, $(II_1)$, $(II_2)$ and $(II_3)$
  proves the claim.
\end{proof}
%

\begin{remark}
\label{rem:pVMO}
The condition on $p$ can be weakened for the last estimate. Indeed, we can replace the smallness of the log H{\"o}lder constant by the assumption that the oscillations of $p$ are small:
\begin{align*}
 \bigg(\dashint_{2Q_j} \abs{p(\cdot)-p_j}^{2\sigma'}dx\bigg)^\frac1{2\sigma'}
&\leq \frac{\delta_1}{\log(e+\max \set{1/\ell(Q_j), \ell(Q_j),
      \abs{\cent(Q_j)}})}.
\end{align*}
Or as a counterpart for the pointwise vanishing condition, the following $\setVMO$ condition:
\begin{align*}
 \frac{ \bigg(\dashint_{2Q_j} \abs{p(\cdot)-p_j}^{2\sigma'}dx\bigg)^\frac1{2\sigma'}}{\log(e+\max \set{1/\ell(Q_j), \ell(Q_j),
      \abs{\cent(Q_j)}})}\to 0,
\end{align*}
when $\ell(Q_j)\to 0$ or $\cent(Q_j)\to\infty$. This is a first step to weaken the pointwise vanishing log H\"older continuity by an integral vanishing oscillation condition on the exponent.

However, up to know we still require the (not small) log H{\"o}lder continuity to be able to apply the key estimate (Lemma~\ref{lem:pjensen}). 
\end{remark}

\subsection{Main results}
\label{ssub:main}

Let us present the main results of this paper on local higher
integrability.
\begin{theorem}
  \label{thm:local}
  Let $\Omega \subset \Rn$ be a cube and let $u$ be a solution
  of~\eqref{eq:difeq} on $2\Omega$. Further, let $p\in\PPln(2\Omega)$,
  $1<p^-\leq p^+<\infty$, $q \geq 1$, and $\abs{G}^\px \in
  L^q(2\Omega)$. Then there exists a $\delta_1>0$ such that
  $c_{\log}(p|_{2\Omega})\leq \delta_1$ implies
  \begin{align*}
    \bigg(\dint_{\Omega}\abs{Du}^{\px q} \,dx\bigg)^\frac{1}{q}\leq c
    \dint_{2\Omega}\abs{Du}^{\px} \,dx+
    c\bigg(\dint_{2\Omega}(\abs{G}^\px+h)^q \,dx\bigg)^\frac{1}{q}.
  \end{align*}
  Here $\delta_1$ and $c$ only depend on
  $\norm{\abs{Du}^\px}_{1,2\Omega}$, $p^-,p^+$, and $q$. The function $h(x):=(e+\abs{x})^{-2n}$. 
\end{theorem}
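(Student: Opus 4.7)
The plan is to first establish the good-$\lambda$ redistributional estimate $\abs{\Uel} \le \delta(\epsilon)\,\abs{\mathcal{O}_\lambda}$ for every $\lambda \ge \lambda_0$, with $\delta(\epsilon) \to 0$ as $\epsilon \to 0$, and then integrate it in the manner sketched in Subsection~\ref{ssec:red}. Fix $\kappa := 2^{n+1} c_4$ and choose $\delta_1$ so small that $c_{\log}(p|_{2\Omega}) \le \delta_1$ simultaneously validates \eqref{eq:assQj} on every $\Omega$-dyadic sub-cube of $2\Omega$ and guarantees Proposition~\ref{pro:comparison} with a comparison parameter $\delta_0 = \delta_0(\epsilon) \to 0$. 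Using the Calder\'on--Zygmund decomposition $\mathcal{O}_\lambda \subseteq \bigcup_j Q_j$ from Subsection~\ref{ssec:max}, the task reduces to proving $\abs{Q_j \cap \Uel} \le c\,\delta_0\,\abs{Q_j}$ for each $j$ with $Q_j \cap \Uel \neq \emptyset$.

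Fix such a $Q_j$. The comparison map $w_j$ from~\eqref{eq:lhp} is $p_j$-harmonic on $2Q_j$, so Theorem~\ref{thm:hp1} supplies the Lipschitz bound
\[
\norm{Dw_j}_{L^\infty(\frac{3}{2} Q_j)}^{p_j} \le c\, \dashint_{2Q_j} \abs{Dw_j}^{p_j}\,dx \le c\,\lambda,
\]
where the second inequality follows from Lemma~\ref{lem:wpj}. Separating the cases $\abs{Dw_j} \le 1$ and $\abs{Dw_j} > 1$ and invoking the log-H\"older bound on $\abs{\px - p_j}$ from Lemma~\ref{lem:ppjavg}, this upgrades to $\abs{Dw_j(x)}^{\px} \le c_0\,\lambda$ pointwise on $\frac{3}{2} Q_j$. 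By the maximality of $Q_j$, every $\Omega$-dyadic cube $Q \supsetneq Q_j$ satisfies $\dashint_{2Q}\abs{Du}^\px \le \lambda$, so $\Ms(\abs{Du}^\px)(x) \le \max\set{\lambda, M^*_{Q_j}(\abs{Du}^\px)(x)}$ for $x \in Q_j$. Combined with $\abs{Du}^\px \le c(\abs{D(u-w_j)}^\px + \abs{Dw_j}^\px)$, this forces $M^*_{Q_j}(\abs{D(u-w_j)}^\px)(x) \ge (\kappa - c c_0)\lambda /c$ for every $x \in Q_j \cap \Uel$ (provided $\kappa > c c_0 + 1$), and the weak-$(1,1)$ bound for $M^*_{Q_j}$ yields
\[
\abs{Q_j \cap \Uel} \le \frac{c}{\kappa\lambda} \int_{2Q_j} \abs{D(u-w_j)}^\px\,dx.
\]

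The remaining step is to convert the bound on $(A(\cdot,Du) - A(\cdot,Dw_j))\cdot (Du - Dw_j)$ from Proposition~\ref{pro:comparison} into a modular estimate $\int_{2Q_j} \abs{D(u-w_j)}^\px\,dx \le c\,\delta_0\,\lambda\,\abs{Q_j}$; once this is in hand, summing over $j$ gives the good-$\lambda$ estimate. The $L^q$ conclusion then follows by multiplying by $\lambda^{q-1}$, integrating over $[\lambda_0,\infty)$, absorbing the $\delta_0$-term on the left for $\epsilon$ sufficiently small, and applying the strong $(q,q)$ bound $\norm{\Mss(\gh)}_q \le c\, \norm{\gh}_{L^q(2\Omega)}$. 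The contribution from $\lambda \le \lambda_0$, combined with the definition~\eqref{eq:lambda0}, produces the $\dashint_{2\Omega}\abs{Du}^\px$ term on the right-hand side.

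The principal obstacle is the modular conversion just described: axiom~\eqref{eq:diffAV} only controls $\abs{Du}^\px$ in terms of $\abs{Dw_j}^\px$ plus the pairing, not the difference $\abs{D(u-w_j)}^\px$ directly. The standard resolution uses the shifted $V_\px$-function of the $\px$-Laplacian, splitting pointwise into the regions $\px \ge 2$ and $\px < 2$; in the subquadratic region, Hölder's inequality against $\dashint_{2Q_j}(\abs{Du}^\px + \abs{Dw_j}^\px)\,dx \le c\lambda$ (from \eqref{eq:Qjlambda} and Lemma~\ref{lem:dwdu}) absorbs the $(\abs{Du}+\abs{Dw_j})^{2-\px}$ factor produced by the $V$-function identity. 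A secondary technical point is the uniform verification of~\eqref{eq:assQj} across all sub-cubes $Q_j \subset 2\Omega$, which is immediate from the monotonicity $c_{\log}(p|_{2Q_j}) \le c_{\log}(p|_{2\Omega}) \le \delta_1$.
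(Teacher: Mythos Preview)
Your argument follows the Acerbi--Mingione route: control $M^*_{Q_j}(\abs{Dw_j}^\px)$ through the interior Lipschitz bound, decompose $\abs{Du}^\px \le c\abs{D(u-w_j)}^\px + c\abs{Dw_j}^\px$, and hit $\abs{D(u-w_j)}^\px$ with the weak-$(1,1)$ inequality. This is workable, but the paper takes a shorter path that eliminates what you flag as ``the principal obstacle'' altogether. Axiom~\eqref{eq:diffAV} already delivers the pointwise bound
\[
  \abs{Du}^\px \le c_4\,\abs{Dw_j}^\px + c_4\,\big(A(\cdot,Du)-A(\cdot,Dw_j)\big)\cdot(Du-Dw_j),
\]
so the quantity $\abs{D(u-w_j)}^\px$ never enters and no $V$-function machinery or subquadratic case split is needed. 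Concretely, in Proposition~\ref{prop:goodl} the paper picks, for each $y\in Q_j\cap\Uel$, the cube $\frC_y\subsetneq Q_j$ realising $\dashint_{2\frC_y}\abs{Du}^\px>\kappa\lambda$, applies~\eqref{eq:diffAV} together with Lemma~\ref{lem:dw} (which gives $\dashint_{2\frC_y}\abs{Dw_j}^\px\le c\lambda$ via the key estimate rather than a pointwise $p_j\to\px$ upgrade), obtains $\kappa\lambda\,\abs{\frC_y}\le c\int_{2\frC_y}\big(A(\cdot,Du)-A(\cdot,Dw_j)\big)\cdot(Du-Dw_j)\,dx$, sums over a maximal subfamily, and invokes Proposition~\ref{pro:comparison} to reach~\eqref{eq:glb2} directly. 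A minor technical point: your pointwise upgrade $\abs{Dw_j}^{p_j}\le c\lambda \Rightarrow \abs{Dw_j}^\px\le c_0\lambda$ on $\tfrac32 Q_j$ is not clean when $\abs{Dw_j}<1$, and Lemma~\ref{lem:ppjavg} only controls $\abs{\px-p_j}$ in average; the paper handles this at the level of cube averages in Lemma~\ref{lem:dw}, where the residual $h$-term is absorbed via~\eqref{eq:QjGlambda}. The final layer-cake integration and absorption are the same in both arguments.
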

\begin{theorem}
  \label{thm:localv}
  Let $\Omega \subset \Rn$ be a cube and let $u$ be a solution
  of~\eqref{eq:difeq} on $2\Omega$. Further, let $p\in\PPvln(2\Omega)$,
  $1<p^-\leq p^+<\infty$, $q \geq 1$, and $\abs{G}^\px \in
  L^q(2\Omega)$. Then
  \begin{align*}
    \bigg(\dint_{\Omega}\abs{Du}^{\px q} \,dx\bigg)^\frac{1}{q}\leq c
    \dint_{2\Omega}\abs{Du}^{\px} \,dx+
    c\bigg(\dint_{2\Omega}(\abs{G}^\px+h)^q \,dx\bigg)^\frac{1}{q},
  \end{align*}
  where $c$ depends on $\norm{\abs{Du}^\px}_{1,2\Omega}$, $p^-,p^+$,
  $q$ and $p$ via the vanishing $\log$-H{\"o}lder continuity. The function $h(x):=(e+\abs{x})^{-2n}$. 
\end{theorem}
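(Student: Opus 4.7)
The plan is to deduce Theorem~\ref{thm:localv} from Theorem~\ref{thm:local} by a finite covering argument. The idea is that vanishing $\log$-H\"older continuity allows us to make the local $\log$-H\"older constant of $p$ arbitrarily small on sufficiently small sub-cubes of $2\Omega$. This is the precise mechanism that replaces the smallness assumption $c_{\log}(p|_{2\Omega})\le\delta_1$ of Theorem~\ref{thm:local}.

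First I would fix the threshold $\delta_1>0$ produced by Theorem~\ref{thm:local}; note that $\delta_1$ depends only on $\norm{\abs{Du}^\px}_{1,2\Omega}$, $p^-$, $p^+$, and $q$, all of which are intrinsic to the setting of Theorem~\ref{thm:localv}. Since $p\in\PPvln(2\Omega)$, the vanishing $\log$-H\"older modulus provides an $r_0\in(0,\ell(\Omega)/2]$ such that $c_{\log}(p|_{2Q})\le\delta_1$ for every cube $Q$ with $\ell(Q)\le r_0$; here one uses both the local vanishing condition~\eqref{ineq:log_cont} for small diameters and, for cubes far from the origin, the decay part of the vanishing condition to control~$p_\infty$. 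The bound $\ell(Q_i)\le\ell(\Omega)/2$ ensures $2Q_i\subset 2\Omega$, so $u$ solves \eqref{eq:difeq} on $2Q_i$.

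Next I would pick a finite cover $\{Q_i\}_{i=1}^N$ of $\Omega$ by cubes of side length $r_0$ with bounded overlap of the doubles $\{2Q_i\}$, depending only on $n$. Since $c_{\log}(p|_{2Q_i})\le\delta_1$ and $\norm{\abs{Du}^\px}_{1,2Q_i}\le\norm{\abs{Du}^\px}_{1,2\Omega}$, Theorem~\ref{thm:local} applies to every pair $(Q_i,2Q_i)$ with a constant $c$ uniform in $i$, giving
\begin{align*}
  \bigg(\dint_{Q_i}\abs{Du}^{\px q}\,dx\bigg)^{1/q}
  \le c\dint_{2Q_i}\abs{Du}^\px\,dx
  +c\bigg(\dint_{2Q_i}(\abs{G}^\px+h)^q\,dx\bigg)^{1/q}.
\end{align*}
Multiplying by $|Q_i|^{1/q}$ converts the averages to absolute integrals with constants that depend on $r_0$ and $|\Omega|$, hence on $p$ through its vanishing modulus.

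Finally I would sum these local estimates. Using the subadditivity $(a+b)^{1/q}\le a^{1/q}+b^{1/q}$ together with $\int_\Omega\abs{Du}^{\px q}\le\sum_i\int_{Q_i}\abs{Du}^{\px q}$ (where the $Q_i$ can be arranged essentially disjoint), and then the bounded overlap of $\{2Q_i\}$ on the right-hand side, one obtains a bound of the form
\begin{align*}
  \bigg(\int_\Omega\abs{Du}^{\px q}\,dx\bigg)^{1/q}
  \le C_1+C_2\bigg(\int_{2\Omega}(\abs{G}^\px+h)^q\,dx\bigg)^{1/q},
\end{align*}
where $C_1$ absorbs a term of the form $N\cdot\norm{\abs{Du}^\px}_{1,2\Omega}\cdot r_0^{n/q-n}$ and is controlled by the stated dependencies. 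Reintroducing the averages yields the claim. The main obstacle is the bookkeeping: the constants must be shown to depend \emph{only} on $\norm{\abs{Du}^\px}_{1,2\Omega}$, $p^\pm$, $q$, and $p$ through its vanishing $\log$-H\"older modulus (through $r_0$ and $N$), and one must verify carefully that restricting $p$ to $2Q_i$ really does yield $c_{\log}(p|_{2Q_i})\le\delta_1$ uniformly across the cover, which is precisely where the vanishing condition is used in place of a global smallness assumption.
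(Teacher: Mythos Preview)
Your covering approach is natural, but it misses the point of the theorem and introduces a genuine gap. When you multiply by $\abs{Q_i}^{1/q}$, sum, and convert back to averages over $\Omega$ and $2\Omega$, the constant in front of $\dashint_{2\Omega}\abs{Du}^{\px}\,dx$ picks up a factor of order $r_0^{n(1-q)/q}\abs{\Omega}^{1-1/q}$ (equivalently, of order $N$), and your constant $C_1$ absorbs exactly this. You try to hide this as ``dependence on $p$ through its vanishing modulus'', but $\abs{\Omega}$ is an independent parameter: the vanishing modulus of $p$ fixes $r_0$, not the size of the ambient cube. A constant depending on $\abs{\Omega}$ is \emph{not} among the stated dependencies, and with such a constant the global Corollary~\ref{cor:globalv} (obtained by letting $\Omega=(-R,R)^n$, $R\to\infty$) would fail. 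Avoiding precisely this dependence is the declared novelty of the paper over~\cite{AceM05}.

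The paper's argument is quite different: it does not cover $\Omega$ at all but returns to the good-$\lambda$ machinery behind Theorem~\ref{thm:local}. The crucial observation is that Proposition~\ref{prop:goodl} only needs $c_{\log}(p|_{2Q_j})\le\delta_1$ on the Calder\'on--Zygmund cubes $Q_j$ that actually meet $\Uel$, not on all of $2\Omega$. The vanishing $\log$-H\"older condition is then used in three regimes: if $\ell(Q_j)$ is small, the local vanishing part gives smallness of $c_{\log}(p|_{2Q_j})$; if $2Q_j$ lies outside a large ball $Q_R(0)$, the decay part does. The remaining ``bad'' cubes (those with $\ell(Q_j)\ge r$ and $2Q_j\cap Q_R(0)\neq\emptyset$) are shown to never intersect $\Uel$ once $\epsilon$ is small enough, because on such cubes the weight $h$ contributes a fixed positive amount, forcing $\Mss(\abs{G}^{\px}+h)>\epsilon\lambda$. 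This is where the role of $h$ in the definition of $\Uel$ becomes essential, and it is exactly the mechanism your covering argument cannot reproduce.
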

We postpone the proof of theses theorems until the end of this
subsection.
The above theorems on local higher integrability have a global
counterpart.
\begin{corollary}
  \label{cor:global}
  Let $u$ be a solution of~\eqref{eq:difeq} on~$\Rn$ with $Du
  \in L^\px(\Rn)$, let $p \in \PPln(\Rn)$, $q \geq 1$, and $G^\px \in
  L^1(\Rn) \cap L^q(\Rn)$. Then there exists $\delta_1>0$ such that
  $c_{\log}(p)\leq \delta_1$ implies
  \begin{align*}
    \norm{\abs{Du}^\px}_q\leq c\norm{\abs{G}^\px+h}_q.
  \end{align*}
  Here $\delta_1$ and $c$ depend on $\norm{\abs{Du}^\px}_1$
  and~$q$. The function $h(x):=(e+\abs{x})^{-2n}$. 
  %
\end{corollary}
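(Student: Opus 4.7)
The plan is to deduce the global estimate from the local one of Theorem~\ref{thm:local} by applying it on the sequence of centered cubes $\Omega_R:=(-R/2,R/2)^n$ and letting $R\to\infty$. The critical feature that makes this work is that when one converts the averaged local estimate to an estimate on absolute integrals (by multiplying through by $\abs{\Omega_R}^{1/q}$), the prefactor in front of the ``energy'' term $\int_{2\Omega_R}\abs{Du}^\px\,dx$ carries a decaying power $R^{n(1-q)/q}$ whenever $q>1$.

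For uniformity of constants, note that $c_{\log}(p|_{2\Omega_R})\le c_{\log}(p)\le\delta_1$ for every $R$, and that $\norm{\abs{Du}^\px}_{1,2\Omega_R}\le\norm{\abs{Du}^\px}_1<\infty$. We therefore choose the threshold $\delta_1$ of Corollary~\ref{cor:global} to coincide with the $\delta_1$ supplied by Theorem~\ref{thm:local} when fed with the global $L^1$-bound; monotone dependence of the constant on this norm then lets us apply Theorem~\ref{thm:local} on each $\Omega_R$ with a constant $c$ independent of $R$.

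Multiplying the inequality of Theorem~\ref{thm:local} by $\abs{\Omega_R}^{1/q}=R^{n/q}$ and absorbing powers of $2^n$ into $c$ gives
\begin{align*}
  \biggl(\int_{\Omega_R}\abs{Du}^{\px q}\,dx\biggr)^{\frac 1q}
  \le c\,R^{\frac{n(1-q)}{q}}\int_{2\Omega_R}\abs{Du}^\px\,dx
  +c\biggl(\int_{2\Omega_R}(\abs{G}^\px+h)^q\,dx\biggr)^{\frac 1q}.
\end{align*}
For $q>1$ the exponent $n(1-q)/q$ is negative; since $\int_{2\Omega_R}\abs{Du}^\px\,dx\le\norm{\abs{Du}^\px}_1$ is uniformly bounded, the first summand on the right vanishes as $R\to\infty$. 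Monotone convergence handles the remaining integrals on both sides, yielding $\norm{\abs{Du}^\px}_q\le c\,\norm{\abs{G}^\px+h}_q$. The case $q=1$ is trivial: both sides are positive and finite (note $h>0$), and the constant is permitted to depend on $\norm{\abs{Du}^\px}_1$.

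The main obstacle is to verify that the constant in Theorem~\ref{thm:local} is genuinely uniform along the exhausting family $\Omega_R$. This rests on the monotonicity of the constant's dependence on $\norm{\abs{Du}^\px}_{1,2\Omega}$, which is standard in comparison-type arguments, so that plugging in the finite global value yields one universal constant. No covering or Vitali-type argument is required, since the single exhaustion combined with the favourable decay $R^{n(1-q)/q}$ does all the work.
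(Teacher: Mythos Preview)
Your proposal is correct and follows essentially the same route as the paper: apply Theorem~\ref{thm:local} on the exhausting family of centered cubes, multiply through to turn averages into absolute integrals, and let $R\to\infty$. Your write-up is in fact more explicit than the paper's one-line remark, in particular about the decay factor $R^{n(1-q)/q}$, the separate handling of $q=1$, and the uniformity of the constants via the global bound $\norm{\abs{Du}^\px}_{1,2\Omega_R}\le\norm{\abs{Du}^\px}_1$.
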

\begin{corollary}
  \label{cor:globalv}
  Let $u$ be a solution of~\eqref{eq:difeq} on~$\Rn$ with $Du
  \in L^\px(\Rn)$, let $p \in \PPvln(\Rn)$, $q \geq 1$, and $G^\px \in
  L^1(\Rn) \cap L^q(\Rn)$. Then
  \begin{align*}
    \norm{\abs{Du}^\px}_q\leq c\norm{\abs{G}^\px+h}_q,
  \end{align*}
  where $c$ depends on $\norm{\abs{Du}^\px}_{1}$, $p^-,p^+$,
  $q$ and $p$ via the vanishing $\log$-H{\"o}lder continuity. The function $h(x):=(e+\abs{x})^{-2n}$. 
\end{corollary}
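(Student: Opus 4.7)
The plan is to obtain Corollary~\ref{cor:globalv} from the local result Theorem~\ref{thm:localv} by exhausting $\Rn$ with cubes. Let $\Omega_R$ denote the cube of side length $R$ centered at the origin. Applying Theorem~\ref{thm:localv} to $u$ on $\Omega_R$ yields
\begin{align*}
  \biggl(\int_{\Omega_R}\abs{Du}^{\px q}\,dx\biggr)^{1/q}
  \leq c\int_{2\Omega_R}\abs{Du}^\px\,dx + c\biggl(\int_{2\Omega_R}(\abs{G}^\px+h)^q\,dx\biggr)^{1/q}.
\end{align*}
The central observation is that the constant $c$ can be taken independent of $R$: by hypothesis $\norm{\abs{Du}^\px}_{1,2\Omega_R}\leq \norm{\abs{Du}^\px}_{1}<\infty$, the numbers $p^\pm$ are global bounds, and the vanishing $\log$-H{\"o}lder data of $p|_{2\Omega_R}$ is controlled by that of $p$ on all of $\Rn$ (the decay condition at infinity only becomes easier to satisfy on a bounded sub-domain).

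Letting $R\to\infty$ and applying monotone convergence on both sides then gives
\begin{align*}
  \norm{\abs{Du}^\px}_q \leq c\,\norm{\abs{Du}^\px}_1 + c\,\norm{\abs{G}^\px+h}_q.
\end{align*}
To convert this into the form claimed in the corollary I would absorb the first term on the right using the reference function $h$. Since $h(x)=(e+\abs{x})^{-2n}$ and $2nq>n$, we have $\norm{h}_q\in(0,\infty)$ with $\norm{h}_q>0$, so $\norm{\abs{G}^\px+h}_q\geq \norm{h}_q$, and hence
\begin{align*}
  \norm{\abs{Du}^\px}_1\leq \frac{\norm{\abs{Du}^\px}_1}{\norm{h}_q}\,\norm{\abs{G}^\px+h}_q.
\end{align*}
Folding this ratio into the constant, which is permitted to depend on $\norm{\abs{Du}^\px}_1$, yields the claimed inequality $\norm{\abs{Du}^\px}_q\leq c'\,\norm{\abs{G}^\px+h}_q$ with the prescribed dependence.

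The one point requiring real care is the uniformity of the constants in Theorem~\ref{thm:localv} as $R$ varies. Inspection of the proof shows that the dependence on $p$ enters only through the vanishing moduli of $1/p$, through $p^\pm$, and through the comparison estimate (Proposition~\ref{pro:comparison}); each of these quantities is majorised by its counterpart for $p$ on all of $\Rn$. Once this uniformity is secured, the exhaustion argument together with the absorption step above delivers the corollary. An analogous argument starting from Theorem~\ref{thm:local} (rather than its vanishing version) yields Corollary~\ref{cor:global} under the smallness assumption $c_{\log}(p)\leq \delta_1$.
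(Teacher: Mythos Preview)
Your approach is essentially the one the paper uses: apply Theorem~\ref{thm:localv} on cubes $\Omega_R=(-R,R)^n$, convert from averages to integrals, and let $R\to\infty$, relying on the fact that all parameters governing the constant (the $L^1$ bound on $\abs{Du}^{\px}$, $p^\pm$, and the vanishing $\log$-H{\"o}lder data) are controlled by their global counterparts.

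One small point worth tightening: when you pass from the averaged estimate of Theorem~\ref{thm:localv} to your displayed non-averaged inequality, the correct scaling (multiply by $\abs{\Omega_R}^{1/q}$) actually produces the sharper factor $c\,\abs{\Omega_R}^{1/q-1}$ in front of $\int_{2\Omega_R}\abs{Du}^{\px}\,dx$, not just $c$. Your written inequality is therefore only a consequence once $\abs{\Omega_R}\geq 1$, which is harmless since $R\to\infty$; but noticing the extra factor shows that for $q>1$ this term vanishes in the limit outright, and the absorption trick via $\norm{h}_q>0$ is then only needed for the borderline case $q=1$ (where the statement is essentially tautological anyway, given the allowed dependence of $c$). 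The paper's terse ``multiply by $\abs{\Omega}$ and let $R\to\infty$'' is exploiting exactly this cancellation.
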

The two corollaries are immediate consequences of the two theorems
above. Just apply Theorem~\ref{thm:local} and
Theorem~\ref{thm:localv}, resp., to $\Omega = (-R,R)^n$, multiply by
$\abs{\Omega}$ and let $R \to \infty$.

Before we proof Theorem~\ref{thm:local} and Theorem~\ref{thm:localv}
we need a few auxiliary results. First, we need the interior
regularity of $p_j$-harmonic functions, which was proven by Uhlenbeck for systems and $p\geq 2$ in \cite[(3.2)Theorem]{Uhl77}. By duality the estimate holds also in the case $p\leq2$; see also~\cite{DieSV09} for a more general version of the estimate below. 
\begin{theorem}
  \label{thm:hp1}
  The $p_j$-harmonic function $w_j$ satisfies
  \begin{align*}
    \sup_{\frac{3}{2}Q_j}\abs{Dw_j} \leq
    c\,\left(\dint_{2Q_j}\abs{Dw_j}^{p_j} \,dx\right)^\frac{1}{p_j}.
  \end{align*}
\end{theorem}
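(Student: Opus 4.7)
The plan is to apply a De Giorgi--Moser iteration to powers of $\abs{Dw_j}$, following Uhlenbeck's original argument in \cite{Uhl77}. The crucial structural input is that the differentiated $p$-Laplace system is uniformly elliptic in the weighted sense associated to the degenerate weight $\abs{Dw_j}^{p_j-2}$, so once the weighted energy estimate is in place the supremum bound is produced by the standard iteration machinery.

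In the superquadratic regime $p_j\geq 2$, classical theory gives $w_j\in C^{1,\alpha}_{\loc}(2Q_j)\cap W^{2,2}_{\loc}(2Q_j)$. Writing $A_j(z)=\abs{z}^{p_j-2}z$ and differentiating $\divergence A_j(Dw_j)=0$ in direction $x_k$, one exploits the pointwise ellipticity
\begin{align*}
DA_j(z)(\xi,\xi)\,\sim\, \abs{z}^{p_j-2}\,\abs{\xi}^2.
\end{align*}
I would test the differentiated system with $\phi=\eta^2\abs{Dw_j}^{2\gamma}\partial_k w_j$, summed over $k$, for a cutoff $\eta\in C^\infty_c(2Q_j)$ with $\chi_{(3/2)Q_j}\leq\eta\leq\chi_{(7/4)Q_j}$ and any $\gamma\geq 0$, obtaining the weighted Caccioppoli bound
\begin{align*}
\int \eta^2\abs{Dw_j}^{p_j-2+2\gamma}\abs{D^2w_j}^2\,dx\leq c\,(1+\gamma)^2\int\abs{D\eta}^2\abs{Dw_j}^{p_j+2\gamma}\,dx.
\end{align*}
Combined with the Sobolev embedding applied to $\eta\abs{Dw_j}^{(p_j+2\gamma)/2}$, this yields a reverse H{\"o}lder inequality on concentric cubes. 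A Moser iteration along a geometric sequence of cubes shrinking to $\tfrac{3}{2}Q_j$ then produces the $L^\infty$-estimate in terms of the $L^{p_j}$-norm on $2Q_j$.

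For the subquadratic case $p_j<2$ the weight $\abs{Dw_j}^{p_j-2}$ is singular on the set $\{Dw_j=0\}$, so I would carry out the same computation on the regularised system with $A_j^\delta(z)=(\delta+\abs{z}^2)^{(p_j-2)/2}z$. The smooth solutions $w_j^\delta$ satisfy the weighted Caccioppoli estimate with constants independent of $\delta$, since the structural bound $DA_j^\delta(z)(\xi,\xi)\sim(\delta+\abs{z}^2)^{(p_j-2)/2}\abs{\xi}^2$ is uniform, and passing $\delta\to 0$ along the $W^{1,p_j}$-convergence $w_j^\delta\to w_j$ preserves the supremum bound by lower semicontinuity. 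Alternatively one invokes the duality argument indicated in the paper, which reduces the subquadratic case to a superquadratic one. The main obstacle is precisely this limit passage: the dependence of the Moser constants on $\gamma$ must be tracked carefully so that the geometric series governing the iteration converges uniformly in $\delta$, which is the technical heart of Uhlenbeck's original argument.
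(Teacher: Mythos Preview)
Your proposal is correct and aligns with the paper's treatment: the paper does not give its own proof of this theorem but simply cites Uhlenbeck~\cite{Uhl77} for the case $p_j\geq 2$ and remarks that the subquadratic case follows by duality (with a reference to~\cite{DieSV09} for a more general version). Your sketch is precisely an outline of Uhlenbeck's Moser iteration argument, and your handling of $p_j<2$ via regularisation or duality matches the paper's remark, so there is nothing to add.
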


This implies 
\begin{lemma}
  \label{lem:dw}
  With the same assumptions as in Proposition~\ref{pro:comparison} on $Q_j$ we have
  \begin{align*}
    \bigg(\dint_{2Q'} \abs{Dw_j}^{\sigma p_j} \,dx\bigg)^{\frac
      1\sigma}&\leq c\,\lambda.
    \\
    \dint_{2Q'} \abs{Dw_j}^\px \,dx&\leq c\, \lambda + c\,
    \dashint_{2Q'} (e+\abs{x})^{-2n} \,dx
  \end{align*}
  for all $Q'\in \Delta_{\Omega}$ with $Q' \subsetneqq Q_i$ and $Q'
  \cap \Uel \not= \emptyset$.
\end{lemma}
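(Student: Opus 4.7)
The plan is to derive both inequalities from the interior $L^\infty$-regularity of the comparison map~$w_j$. Applying Theorem~\ref{thm:hp1} on~$2Q_j$, Jensen's inequality with exponent $\sigma^3\geq 1$, and Lemma~\ref{lem:wpj}, we obtain
\begin{align*}
  \sup_{\frac{3}{2}Q_j}\abs{Dw_j}^{p_j}
  \leq c\dint_{2Q_j}\abs{Dw_j}^{p_j}\,dx
  \leq c\biggl(\dint_{2Q_j}\abs{Dw_j}^{\sigma^3 p_j}\,dx\biggr)^{\!1/\sigma^3}
  \leq c\lambda.
\end{align*}
Any $\Omega$-dyadic cube $Q'\subsetneqq Q_j$ satisfies $\ell(Q')\leq\ell(Q_j)/2$, and a short check on the positions of the centres shows $2Q'\subseteq \tfrac{3}{2}Q_j$. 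Therefore the sup-bound persists on $2Q'$, and the first displayed estimate is immediate:
\begin{align*}
  \biggl(\dint_{2Q'}\abs{Dw_j}^{\sigma p_j}\,dx\biggr)^{\!1/\sigma}
  \leq \sup_{2Q'}\abs{Dw_j}^{p_j}\leq c\lambda.
\end{align*}

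For the second inequality the plan is to argue pointwise on~$2Q'$. Writing $\abs{Dw_j}^{\px}=\abs{Dw_j}^{p_j}\cdot\abs{Dw_j}^{\px-p_j}$ and using the pointwise bound $\abs{Dw_j}\leq K:=(c\lambda)^{1/p_j}$, the correction factor $\abs{Dw_j}^{\px-p_j}$ is dominated by $K^{\abs{\px-p_j}}$ on $\{\abs{Dw_j}\geq 1\}$ and by~$1$ elsewhere. The key step will be to show the uniform bound
\begin{align*}
  K^{\abs{\px-p_j}}=(c\lambda)^{\abs{\px-p_j}/p_j}\leq C\quad\text{for all }x\in 2Q'.
\end{align*}
This rests on two ingredients: the size bound $\lambda\leq c/\abs{Q_j}$, which is an immediate consequence of $\lambda\abs{Q_j}\leq \int_{2\Omega}\abs{Du}^\px\leq C$ (and uses the assumption $\norm{\abs{Du}^\px}_{1,2\Omega}<\infty$); and Lemma~\ref{lem:ppjavg} applied on $2Q_j$, which yields $\abs{\px-p_j}/p_j\leq c/\log(e+\max\{\ell(Q_j)^{-1},\ell(Q_j),\abs{\cent(Q_j)}\})$ precisely because $y_j$ is chosen as the point of $\overline{2Q_j}$ farthest from the origin. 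Multiplying these two estimates produces a constant independent of $Q_j$ and $\lambda$. Integrating the resulting pointwise inequality $\abs{Dw_j}^\px\leq C\abs{Dw_j}^{p_j}+1$ on $2Q'$ and applying the sup-bound from the first step gives $\dint_{2Q'}\abs{Dw_j}^\px\,dx\leq c\lambda+1$.

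What remains is to absorb the additive constant into $c\dint_{2Q'}(e+\abs{x})^{-2n}\,dx$. A refinement of the pointwise estimate, using the decay part of~\eqref{eq:plogH} for cubes with large $\abs{\cent(Q_j)}$ and observing that $\dint_{2Q'}(e+\abs{x})^{-2n}\,dx$ is bounded below by a positive constant whenever $\cent(Q_j)$ stays bounded, yields the stated inequality. The main technical obstacle will be the uniform control of $(c\lambda)^{\abs{\px-p_j}/p_j}$ across all scales of~$Q_j$ and all locations in~$\Rn$: it is the interplay between the size bound on~$\lambda$, the log-H\"older modulus of~$p$, and the special choice of~$y_j$ that makes the estimate go through.
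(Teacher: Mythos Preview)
Your treatment of the first inequality is correct and matches the paper: both rely on the interior sup-bound Theorem~\ref{thm:hp1} combined with Lemma~\ref{lem:wpj}, together with the geometric observation $2Q'\subset\tfrac{3}{2}Q_j$ for dyadic $Q'\subsetneqq Q_j$.

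For the second inequality you take a genuinely different route. The paper applies the key estimate Lemma~\ref{lem:pjensen} with variable exponent $\sigma p_j/\px$ at the special point~$y_j$; this produces the additive error $\dashint_{2Q'}(e+\abs{y})^{-2n}\,dy$ directly. Your pointwise strategy can also succeed, but two points need correction. First, Lemma~\ref{lem:ppjavg} is an \emph{averaged} oscillation bound and does not yield a pointwise estimate on $\abs{\px-p_j}$; the bound you actually need (and which suffices for $K^{\abs{\px-p_j}}\le C$) is the local part of~\eqref{eq:plogH}, which gives $\abs{p(x)-p_j}\le c/\log(e+1/\ell(Q_j))$ only, and this is enough because $K>1$ forces $\ell(Q_j)$ small via $\lambda\le c\abs{Q_j}^{-1}$.

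Second, and more seriously, your final absorption step is wrong as written. The claim that $\dashint_{2Q'}(e+\abs{x})^{-2n}\,dx$ is bounded below whenever $\cent(Q_j)$ stays bounded is false: $Q_j$ may still be large (there is no bound on $\ell(Q_j)$ independent of~$\abs{\Omega}$), and then $Q'$ can sit near a far corner of $Q_j$, making the average of~$h$ arbitrarily small. The dichotomy on $\cent(Q_j)$ therefore does not close. The correct refinement of your pointwise argument is to split $\{\abs{Dw_j}<1\}$ further into $\{\abs{Dw_j}<h(x)\}$ and $\{h(x)\le\abs{Dw_j}<1\}$: on the first set $\abs{Dw_j}^{\px}\le h(x)$ trivially, while on the second set $\abs{Dw_j}^{\px-p_j}\le h(x)^{-\abs{\px-p_j}}=(e+\abs{x})^{2n\abs{\px-p_j}}\le e^{c}$ by the decay estimate~\eqref{eq:pvspj}. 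This yields the pointwise bound $\abs{Dw_j}^{\px}\le C\abs{Dw_j}^{p_j}+h(x)$, and averaging then gives exactly the stated inequality. This is essentially the $H_1,H_2,H_3$ decomposition used elsewhere in the paper (Proposition~\ref{pro:comparison}).
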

\begin{proof}
  The first estimate follows directly from Theorem~\ref{thm:hp1} and
  Lemma~\ref{lem:wpj}. 

  By the key estimate (Lemma~\ref{lem:pjensen}) applied to
  the exponent $\sigma \frac{p_j}{\px}\geq 1$ at the point $y_j$ and
  Remark~\ref{rem:dwK} we have
  \begin{align*}
    \bigg( \dashint_{2Q'} \abs{Dw_j}^\px
    \,dx\bigg)^{\sigma\frac{p_j}{p(y_j)}}&\leq c\, \dint_{2Q'}
    \abs{Dw_j}^{\sigma p_j} + c\,\dashint_{2Q'} (e+\abs{y})^{-\sigma
      2n}\,dy.
  \end{align*}
  This and the first part of the lemma imply
  \begin{align*}
    \dashint_{2Q'} \abs{Dw_j}^\px \,dx&\leq c\, \lambda + c\,
    \dashint_{2Q'} (e+\abs{y})^{-2n}\,dy.
  \end{align*}
  Since $Q' \cap \Uel \not= \emptyset$, the last integral is bounded
  by $c\, \epsilon\lambda$ due to~\eqref{eq:QjGlambda}.
\end{proof}
We are now prepared to show our redistributional
estimate~\eqref{eq:glb}. 
\begin{proposition}
  \label{prop:goodl}
  There exists $\kappa \geq 2^n$ such that for every $\delta>0$ we
  find $\epsilon>0$ and $\delta_1>0$ with the following property for all $\lambda \geq
  \lambda_0$: If
  $c_{\log}(p|_{2Q_j})\leq \delta_1$ for all $Q_j\cap \Uel\neq \emptyset$, then there holds
  \begin{align*}
    \abs{\Uel}\leq \delta\,
    \abs{\mathcal{O}_\lambda}.
  \end{align*}
  The value of $\epsilon$ and $\delta_1$ depends on $\delta$, $p^+$,
  and $\norm{\abs{Du}^\px}_{L^1(2\Omega)}$.
\end{proposition}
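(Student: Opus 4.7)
The plan is to reduce to a cubewise estimate on the Calderon-Zygmund decomposition already constructed in Subsection~\ref{ssec:max}. Since the $Q_j$ are pairwise disjoint and all contained in $\mathcal{O}_\lambda$, it suffices to prove $\abs{Q_j\cap\Uel}\leq\delta\abs{Q_j}$ for every $Q_j$ with $Q_j\cap\Uel\neq\emptyset$, and then sum. Fix such a $Q_j$. By its maximality and~\eqref{eq:Qjlambda}, every dyadic $Q'\in\Delta_\Omega$ with $Q'\supseteq Q_j$ satisfies $\dashint_{2Q'}\abs{Du}^\px\,dx\leq 2^n\lambda$, so as soon as $\kappa>2^n$ the inequality $M^*_\Omega(\abs{Du}^\px)>\kappa\lambda$ on $Q_j\cap\Uel$ can only be realised by some dyadic $Q'\subsetneq Q_j$, and hence
\begin{align*}
M^*_{Q_j}(\abs{Du}^\px)(x)>\kappa\lambda\qquad\text{for all }x\in Q_j\cap\Uel.
\end{align*}

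The next step is to use the structural estimate~\eqref{eq:diffAV} to separate the contribution of the comparison function $w_j$ from the truly small remainder. With $z=Du$, $\xi=Dw_j$, averaging the pointwise inequality over $2Q'\subseteq 2Q_j$ and taking suprema over $Q'\in\Delta_{Q_j}$ containing $x$ gives
\begin{align*}
M^*_{Q_j}(\abs{Du}^\px)\leq c_4\,M^*_{Q_j}(\abs{Dw_j}^\px)+c_4\,M^*_{Q_j}(F),
\end{align*}
with $F:=(A(\cdot,Du)-A(\cdot,Dw_j))\cdot(Du-Dw_j)\geq 0$. The first term is controlled on $Q_j\cap\Uel$ in an $\epsilon$-independent way: for every $Q'\in\Delta_{Q_j}$ meeting $Q_j\cap\Uel$, either $Q'=Q_j$, and Lemma~\ref{lem:dwdu} combined with Jensen's inequality gives $\dashint_{2Q_j}\abs{Dw_j}^\px\,dx\leq c\lambda$, or $Q'\subsetneq Q_j$ with $Q'\cap\Uel\neq\emptyset$, and Lemma~\ref{lem:dw} gives $\dashint_{2Q'}\abs{Dw_j}^\px\,dx\leq c\lambda+c\,\epsilon\lambda\leq c\lambda$. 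Hence $M^*_{Q_j}(\abs{Dw_j}^\px)\leq C\lambda$ on $Q_j\cap\Uel$ with $C$ independent of $\epsilon$. Fixing once and for all $\kappa:=\max\{2^{n+1},\,4c_4 C\}$ absorbs this term by $\kappa\lambda/2$, forcing
\begin{align*}
c_4\,M^*_{Q_j}(F)(x)>\kappa\lambda/2\qquad\text{on }Q_j\cap\Uel.
\end{align*}

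For the remaining term I apply the weak $(1,1)$ bound of the dyadic maximal operator $M^*_{Q_j}$ to obtain
\begin{align*}
\abs{Q_j\cap\Uel}\leq\bigabs{\{M^*_{Q_j}F>\kappa\lambda/(2c_4)\}}\leq\frac{C_0\,c_4}{\kappa\lambda}\int_{2Q_j}F\,dx.
\end{align*}
Now Proposition~\ref{pro:comparison}, applied with the \emph{already fixed} $\kappa$ and with comparison-target $\delta':=\delta\kappa/(2^{n+1}C_0 c_4)$, produces precisely the $\epsilon$ and $\delta_1$ asserted in the statement and yields $\int_{2Q_j}F\,dx\leq\delta'\lambda\abs{2Q_j}$; together with the previous display this gives $\abs{Q_j\cap\Uel}\leq\delta\abs{Q_j}$. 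Summing over the disjoint $Q_j\subset\mathcal{O}_\lambda$ completes the proof. The subtle point is the order of quantifiers: because Proposition~\ref{pro:comparison} receives $\kappa$ as input and returns $\epsilon,\delta_1$, the constant $\kappa$ must be pinned down by a mechanism that does not depend on $\epsilon,\delta_1$, which is why the $\epsilon$-independence of the absorbing bound on $M^*_{Q_j}(\abs{Dw_j}^\px)$ in the second step is essential.
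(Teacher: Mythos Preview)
Your proof is correct and follows essentially the same route as the paper: reduce to the cubewise estimate~\eqref{eq:glb3}, use~\eqref{eq:diffAV} to split off $\abs{Dw_j}^\px$, bound the latter by $c\lambda$ via Lemma~\ref{lem:dw} (and Lemma~\ref{lem:dwdu} for $Q'=Q_j$), absorb it by fixing $\kappa$ large, and then control the remainder $F=(A(\cdot,Du)-A(\cdot,Dw_j))\cdot(Du-Dw_j)$ through Proposition~\ref{pro:comparison}. The only cosmetic difference is that where the paper picks, for each $y\in Q_j\cap\Uel$, an explicit cube $\frC_y$ realising $\dashint_{2\frC_y}\abs{Du}^\px>\kappa\lambda$, extracts a maximal dyadic subfamily $\{\frC_{j,k}\}$, and sums the resulting inequalities $\kappa\lambda\abs{\frC_{j,k}}\leq c\int_{2\frC_{j,k}}F\,dx$ to reach $\int_{2Q_j}F\,dx$, you instead phrase the same step as sublinearity of $M^*_{Q_j}$ followed by its weak $(1,1)$ bound; these are of course equivalent, the weak $(1,1)$ inequality being proved precisely by that stopping-time covering.
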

\begin{proof}
  We will choose the exact value of $\epsilon$, $\kappa$ and
  $\delta_1$ during the proof.  Let $\lambda \geq \lambda_0$.  We
  already know (see~\eqref{eq:glb2}) that it suffices to prove
  \begin{align}
    \label{eq:glb3}
      \abs{Q_j \cap \Uel} \leq \delta\, \abs{Q_j} \qquad
      \text{ for all $j \in \setN$ with $Q_j \cap \Uel
        \not=\emptyset$.}
  \end{align}
  So let us assume in the following that $Q_j \cap \Uel
  \not= \emptyset$.

  Let $y \in Q_j \cap \Uel$. By definition
  of~$\Uel$, see~\eqref{eq:OlUl} we have
  \begin{align*}
    \Ms(\abs{Du}^\px)(y) &>\kappa \lambda,
    \\
    \Mss(\gh)(y) &\leq\epsilon\lambda.
  \end{align*}
  Therefore, there exists a cube $\frC_y \in \Delta_{2\Omega}$ with
  \begin{align*}
    \dashint_{2\frC_y} \abs{Du}^\px \,dx > \kappa \lambda.
  \end{align*}
  First we assume that $\kappa\geq 2^n$.  Then obviously $\kappa
  \lambda > \lambda_0$ and hence by definition of~$\lambda_0$ the cube
  $\frC_y$ must be a strict dyadic sub-cube of~$\Omega$.  Hence the
  predecessor $\frC_y^\pre$ satisfies $\frC_y^\pre \in
  \Delta_{\Omega}$ and
  \begin{align*}
    \dashint_{2\frC_y^\pre} \abs{Du}^\px\,dx \geq 2^{-n}
    \dashint_{2\frC_y} \abs{Du}^\px \,dx > 2^{-n} \kappa \lambda \geq 
    \lambda.
  \end{align*}
  Since $Q_j$ was chosen to be the maximal cube of $\Delta_{\Omega}$
  containing~$y$ with this property, it follows that $\frC_y^\pre
  \subset Q_j$.

  By~\eqref{eq:diffAV} and Lemma~\ref{lem:dw} (using $\frC_y
  \subsetneqq Q_j$, since $\frC_y^\pre \subset Q_j$) we have
  \begin{align*}
    \kappa\lambda &< \dashint_{2\frC_y} \abs{Du}^\px\,dx
    \\
    &\leq c_4 \dashint_{2\frC_y} \abs{Dw_j}^\px\,dx + c_4
    \dashint_{2\frC_y} \big(A(\cdot,Du)- A(\cdot,Dw_j))\cdot (Du -
    Dw_j)\,dx
    \\
    &\leq c\, \lambda + c_4 \dashint_{2\frC_y} \big(A(\cdot,Du)-
    A(\cdot,Dw_j))\cdot (Du - Dw_j)\,dx.
  \end{align*}
  The constant~$c$ depends on $p^+$ and
  $\norm{\abs{Du}^\px}_{L^1(2\Omega)}$.  So for $\kappa$ large (which
  finally fixes~$\kappa$) we can absorb $c\,\lambda$ into
  $\kappa\lambda$. By multiplication with $\abs{\frC_y}$ we get
  \begin{align*}
    \kappa\lambda \abs{\frC_y} &\leq c\, \int_{2\frC_y}
    \big(A(\cdot,Du)- A(\cdot,Dw_j))\cdot (Du - Dw_j)\,dx.
  \end{align*}
  The collection of $\frC_y$ covers the set $Q_j \cap \Uel$. Since all
  these cubes are $\Omega$-dyadic, there exists a sub-family
  $\set{\frC_{j,k}}_k$ of maximal $\Omega$-dyadic cubes. We sum the
  previous inequality over these cubes to get
  \begin{align*}
    \kappa\lambda \abs{Q_j \cap \Uel} &\leq \kappa\lambda \sum_k
    \abs{\frC_{j,k}}
    \\
    &\leq c\, \sum_k \int_{2\frC_{j,k}} \big(A(\cdot, Du)-
    A(\cdot,Dw_j))\cdot (Du - Dw_j)\,dx
    \\
    &\leq c\, \int_{2Q_j} \big(A(\cdot, Du)- A(\cdot,Dw_j))\cdot (Du -
    Dw_j)\,dx.
  \end{align*}
  Due to Proposition~\ref{pro:comparison} we can find for every
  $\delta>0$ a proper choice of~$\delta_1>0$ and $\epsilon>0$ such
  that the last integral is bounded by $\abs{2Q_j} \delta\,\lambda$. Hence,
  \begin{align*}
    \kappa\lambda \abs{Q_j \cap \Uel} &\leq \abs{2Q_j} \delta
    \,\lambda.
  \end{align*}
  In other words (using $\kappa \geq 2^n$)
  \begin{align*}
    \abs{Q_j \cap \Uel} &\leq \delta\, \kappa^{-1} 2^n \abs{Q_j} \leq
    \delta\, \abs{Q_j}.
  \end{align*}
  This proves the claim.
\end{proof}

We can now proof our first main result on local higher integrability.
\begin{proof}[Proof of Theorem~\ref{thm:local}] 
  If $q \in [1,m_0]$, then the claim follows directly from
  Corollary~\ref{cor:gehring}. So we can assume in the following $q
  > m_0 > 1$.  Fix $\kappa$ as in Proposition~\ref{prop:goodl}.
  Then for $\delta:= \frac 12 \kappa^{-q}$ let $\epsilon$ and
  $\delta_1$ be chosen (depending on~$\delta$) as in
  Proposition~\ref{prop:goodl} such that $\abs{\Uel} \leq \delta
  \abs{\mathcal{O}_\lambda}$ for every $\lambda \geq \lambda_0$ with
  $\lambda_0 = \dashint_{2\Omega} \abs{Du}^\px \,dx$.

  We estimate
  \begin{align}
    \label{eq:5}
    \begin{aligned}
      \dint_{\Omega}\abs{Du}^{\px q} \,dx=&
      \frac{1}{\abs{\Omega}}\bigg(\int_{0}^{\kappa\lambda_0}
      q\,\lambda^{q-1} \bigabs{\Omega \cap \bigset{\abs{Du}^\px >
          \lambda}}\,d\lambda
      \\
      &\hphantom{\frac{1}{\abs{\Omega}}\bigg(}
      +\int_{\kappa\lambda_0}^\infty q\,\lambda^{q-1} \bigabs{\Omega
        \cap \bigset{\abs{Du}^\px > \lambda}} \,d\lambda\bigg)
      \\
      &\leq \kappa^q \lambda_0^q + \frac{1}{\abs{\Omega}} \int_{\kappa
        \lambda_0}^\infty q\,\lambda^{q-1} \bigabs{
        \bigset{\Ms(\abs{Du}^\px) > \lambda}}\, d\lambda
      \\
      &=: \kappa^q \lambda_0^q + (I).
    \end{aligned}
  \end{align}
  For $\Lambda \geq \kappa \lambda_0$ define
  \begin{align*}
    (I_\Lambda) &= \int_{\kappa \lambda_0}^\Lambda q\,\lambda^{q-1}
    \bigabs{\bigset{\Ms(\abs{Du}^\px)>\lambda}} \,d\lambda,
  \end{align*}
  then $(I) = \lim_{\Lambda \to \infty} (I_\Lambda)$. Substitution
  gives
  \begin{align*}
    (I_\Lambda) &:= \kappa^q \int_{\lambda_0}^{\Lambda/\kappa}
    q\,\lambda^{q-1} \bigabs{\bigset{\Ms(\abs{Du}^\px)>\kappa
        \lambda}} \,d\lambda.
  \end{align*}
  From $\abs{\Uel} \leq \delta \abs{\mathcal{O}_\lambda}$ for $\lambda
  \geq \lambda_0$ it follows that
  \begin{align*}
    \bigabs{\bigset{\Ms(\abs{Du}^\px)>\kappa \lambda}} &\leq \;
    \bigabs{\bigset{\Mss(\gh)>\epsilon \lambda}}
    \\
    &\quad + \delta \bigabs{\bigset{\Ms(\abs{Du}^\px)>\lambda}}.
  \end{align*}
  Hence,
  \begin{align*}
    (I_\Lambda) &\leq \kappa^q \int_{\lambda_0}^{\Lambda/\kappa}
    q\,\lambda^{q-1} \bigabs{\bigset{\Mss(\gh)>\epsilon \lambda}}
    \,d\lambda
    \\
    &\quad + \kappa^q \delta \int_{\kappa\lambda_0}^{\Lambda/\kappa}
    q\,\lambda^{q-1} \bigabs{\bigset{\Ms(\abs{Du}^\px)>\lambda}}
    \,d\lambda  
 \\
    &\quad + \kappa^q \delta \int_{\lambda_0}^{\kappa\lambda_0}
    q\,\lambda^{q-1} \bigabs{\bigset{\Ms(\abs{Du}^\px)>\lambda}}
    \,d\lambda.
  \end{align*}
  Since $\kappa^q \delta = \frac 12$, the second term is bounded by
  $\frac 12 (I_\Lambda)$. The last term can be estimated as in \eqref{eq:5}. This implies 
  \begin{align*}
    (I_\Lambda) &\leq 2\,\kappa^q \int_{\lambda_0}^{\Lambda/\kappa}
    q\,\lambda^{q-1} \bigabs{\bigset{\Mss(\gh)>\epsilon \lambda}}
    \,d\lambda+\abs{\Omega}\lambda_0^q
    \\
    &= 2\,\kappa^q \epsilon^{-q}
    \int_{\epsilon\lambda_0}^{\epsilon\Lambda/\kappa} q\,\lambda^{q-1}
    \bigabs{\bigset{\Mss(\gh)>\lambda}} \,d\lambda+\abs{\Omega}\lambda_0^q
    \\
    &\leq 2\,\kappa^q \epsilon^{-q} \int_{\Omega}
    \big(\Mss(\gh)\big)^q\,dx+\abs{\Omega}\lambda_0^q.
  \end{align*}
  The boundedness of the operator $\Ms$ on $L^q(2\Omega)$ (using
  $q> m_0$) implies
  \begin{align*}
    (I_\Lambda) &\leq c\,\kappa^q \epsilon^{-q} \int_{2\Omega}
    \gh^q\,dx+\abs{\Omega}\lambda_0^q.
  \end{align*}
  The constant depends on~$q$, but the lower bound $q > m_0$
  ensures that the operator norm of $\Ms$ is uniformly bounded.  We
  pass to the limit $\Lambda \to \infty$, combine this
  with~\eqref{eq:5} and use the definition of~$\lambda_0$ to get
  \begin{align*}
    \dint_{\Omega}\abs{Du}^{\px q} \,dx &\leq c\,\epsilon^{-q}
    \int_{2\Omega} \gh^q\,dx + \bigg(2\kappa \dashint_{2\Omega} \abs{Du}^\px\bigg)^q
    \,dx.
  \end{align*}
  This proves the claim.
\end{proof}
\begin{proof}[Proof of Theorem~\ref{thm:localv}]
%
%
 Let $q\geq m_0$ and choose $\delta_1>0$ as in
  Theorem~\ref{thm:local}. Since $c_{\log}(p|_{2\Omega})$ does not need to be
  smaller that $\delta_1$, we cannot apply Theorem~\ref{thm:local}
  directly. 
Let $\set{Q_j}_{j\in\setN}$ be the \Calderon Zygmund covering introduced at the beginning of this section. We will show the following:

For every $\delta_1>0$ there exists an $\epsilon_0$, such that for every $\epsilon\leq\epsilon_0$ and all $Q_j$ with $Q_j\cap\Uel\neq \emptyset$ we have 
$c_{\log}(p|_{2Q_j})\leq \delta_1$. Then Proposition \ref{prop:goodl} can be applied and the result follows as in Theorem \ref{thm:local}.

  By the vanishing $\log$-H{\"o}lder continuity, we find $r,R>0$ such
  that
  \begin{alignat*}{2}
    \abs{p(x)-p(y)} &\leq \frac{\delta_1}{\log
      (e+1/\abs{x-y})}
  \end{alignat*}
  for all $x,y$ with $\abs{x-y} \leq r$ and all $x,y \in
  {2\Omega}\setminus Q_R(0)$. We can choose $R$ large enough such that additionally
  \begin{align*}
    \abs{p(z) - p_\infty} &\leq \frac{\delta_1}{\log(e +
      \abs{z})}
  \end{align*}
  for all $z \in {2\Omega} \setminus Q_R(0)$. 
Therefore, if $2Q_j \subset {2\Omega} \setminus Q_R(0)$, then
  $c_{\log}(p|_{2Q_j}) \leq \delta_1$. On the other hand if the length of $Q_j$ is smaller than~$r$, then
  $c_{\log}(p|_{2Q_j}) \leq \delta_1$.

It leaves the case when $\abs{Q_j}\geq r^n$ and $2 Q_j\cap Q_R(0)\neq\emptyset$.
If now $Q_j\cap\Uel\neq\emptyset$, then there exists a $c$ depending on $r$ and $R$, (but independent of $Q_j$), such that
\begin{align*}
 \lambda \abs{2 Q_j}\leq \int_{2\Omega}\abs{Du}^\px dx\leq c\int_{2Q_j}hdx\leq c\epsilon\lambda\abs{2Q_j}.
\end{align*}
This is never the case, whenever $\epsilon$ is small enough. Therefore the proof is complete.
\end{proof}


\begin{thebibliography}{10}

\bibitem{AceM01}
E.~Acerbi and G.~Mingione.
\newblock Regularity results for a class of functionals with non-standard
  growth.
\newblock {\em Arch. Ration. Mech. Anal.}, 156(2):121--140, 2001.

\bibitem{AceM05}
E.~Acerbi and G.~Mingione.
\newblock Gradient estimates for the {$p(x)$}-{L}aplacean system.
\newblock {\em J. Reine Angew. Math.}, 584:117--148, 2005.

\bibitem{CheLevRao06}
Y.~Chen, S.~Levine, and M.~Rao.
\newblock Variable exponent, linear growth functionals in image restoration.
\newblock {\em SIAM J. Appl. Math.}, 66(4):1383--1406 (electronic), 2006.

\bibitem{CruFMP06}
D.~Cruz-Uribe, A.~Fiorenza, J.~M. Martell, and C.~P{\'e}rez.
\newblock The boundedness of classical operators on variable {$L\sp p$} spaces.
\newblock {\em Ann. Acad. Sci. Fenn. Math.}, 31(1):239--264, 2006.

\bibitem{CruFN03}
D.~Cruz-Uribe, A.~Fiorenza, and C.~J. Neugebauer.
\newblock The maximal function on variable {$L\sp p$} spaces.
\newblock {\em Ann. Acad. Sci. Fenn. Math.}, 28(1):223--238, 2003.

\bibitem{DiBMan93}
E.~DiBenedetto and J.~Manfredi.
\newblock On the higher integrability of the gradient of weak solutions of
  certain degenerate elliptic systems.
\newblock {\em Amer. J. Math.}, 115(5):1107--1134, 1993.

\bibitem{DiePhd}
L.~Diening.
\newblock {\em {Theoretical and numerical results for electrorheological
  fluids.}}
\newblock PhD thesis, {Univ. Freiburg im Breisgau, Mathematische Fakult{\"a}t,
  156 p. }, 2002.

\bibitem{Die04}
L.~Diening.
\newblock Maximal function on generalized {L}ebesgue spaces {$L\sp
  {p(\cdot)}$}.
\newblock {\em Math. Inequal. Appl.}, 7(2):245--253, 2004.

\bibitem{DieHHR11}
L.~Diening, P.~Harjulehto, P.~H{\"a}st{\"o}, and M.~R{\r u}{\v z}i{\v c}ka.
\newblock {\em Lebesgue and Sobolev Spaces with Variable Exponents}, volume
  2017 of {\em Lecture Notes in Mathematics}.
\newblock Springer, 1st edition, 2011.

\bibitem{DieKap12}
L.~Diening and P.~Kaplick\'y.
\newblock $L^q$ theory for a generalized stokes system.
\newblock {\em Manuscripta Mathematica}, 141:333-361, 2013. 

\bibitem{DieKapSch11}
L.~Diening, P.~Kaplick{\'y}, and S.~Schwarzacher.
\newblock B{MO} estimates for the {$p$}-{L}aplacian.
\newblock {\em Nonlinear Anal.}, 75(2):637--650, 2012.

\bibitem{DieKapSch13}
L.~Diening, P.~Kaplick{\'y}, and S.~Schwarzacher.
\newblock Campanato estimates for the generalized stokes system.
\newblock {\em Annali di Matematica Pura ed Applicata}, 2013.

\bibitem{DieSch13key}
L.~Diening and S.~Schwarzacher.
\newblock On the key estimate for variable exponent spaces.
\newblock {\em Azerbaijan Journal of Mathematics}, 3(2):75--82, 2013.

\bibitem{DieSV09}
L.~Diening, B.~Stroffolini, and A.~Verde.
\newblock Everywhere regularity of functionals with {$\phi$}-growth.
\newblock {\em Manuscripta Math.}, 129(4):449--481, 2009.

\bibitem{FanZ01}
X.~Fan and D.~Zhao.
\newblock On the spaces {${L}\sp {p(x)}({\Omega})$} and {${W}\sp
  {m,p(x)}({\Omega})$}.
\newblock {\em J. Math. Anal. Appl.}, 263(2):424--446, 2001.

\bibitem{Iwa83}
T.~Iwaniec.
\newblock Projections onto gradient fields and {$L^{p}$}-estimates for
  degenerated elliptic operators.
\newblock {\em Studia Math.}, 75(3):293--312, 1983.

\bibitem{Iwa98}
T.~Iwaniec.
\newblock The {G}ehring lemma.
\newblock {\em Proceedings of the international symposion}, pages 181--204,
  1998.
\newblock Duren, Peter (ed.) et al., Quasiconformal mappings and analysis. A
  collection of papers honoring Frederick W. Gehring to his 70th birthday,
  Springer.

\bibitem{KilKos94}
T.~Kilpel{\"a}inen and P.~Koskela.
\newblock Global integrability of the gradients of solutions to partial
  differential equations.
\newblock {\em Nonlinear Anal.}, 23(7):899--909, 1994.

\bibitem{KinZho99}
J.~Kinnunen and S.~Zhou.
\newblock A local estimate for nonlinear equations with discontinuous
  coefficients.
\newblock {\em Comm. Partial Differential Equations}, 24(11-12):2043--2068,
  1999.

\bibitem{KinZho01}
J.~Kinnunen and S.~Zhou.
\newblock A boundary estimate for nonlinear equations with discontinuous
  coefficients.
\newblock {\em Differential Integral Equations}, 14(4):475--492, 2001.

\bibitem{KovR91}
O.~Kov{\'a}{\v{c}}ik and J.~R{\'a}kosn{\'\i}k.
\newblock On spaces {${L}\sp {p(x)}$} and {${W}\sp {k,p(x)}$}.
\newblock {\em Czechoslovak Math. J.}, 41(116)(4):592--618, 1991.

\bibitem{Orl31}
W.~Orlicz.
\newblock {{\"U}ber konjugierte Exponentenfolgen.}
\newblock {\em Stud. Math.}, 3:200--211, 1931.

\bibitem{RajR96}
K.~Rajagopal and M.~R{\r u}{\v z}i{\v c}ka.
\newblock {On the modeling of electrorheological materials.}
\newblock {\em Mech. Res. Commun.}, 23(4):401--407, 1996.

\bibitem{Ruz00}
M.~\Ruzicka.
\newblock {\em Electrorheological fluids: modeling and mathematical theory},
  volume 1748 of {\em Lecture Notes in Mathematics}.
\newblock Springer-Verlag, Berlin, 2000.

\bibitem{Schw10}
S.~Schwarzacher.
\newblock Higher integrability of elliptic differential equations with variable
  growth.
\newblock Master's thesis, University of Freiburg, Germany, 2010.

\bibitem{Uhl77}
K.~Uhlenbeck.
\newblock Regularity for a class of non-linear elliptic systems.
\newblock {\em Acta Math.}, 138(3-4):219--240, 1977.

\bibitem{Zhi97}
V.~V. Zhikov.
\newblock Meyer-type estimates for solving the nonlinear {S}tokes system.
\newblock {\em Differ. Uravn.}, 33(1):107--114, 143, 1997.

\end{thebibliography}
\end{document}